\let\RE\Re
\let\Re=\undefined
\DeclareMathOperator{\Re}{\RE e}
\let\IM\Im
\let\Im=\undefined
\DeclareMathOperator{\Im}{\IM m}
\newcommand{\R}{\mathbbm R}
\renewcommand{\C}{\mathbbm C}
\renewcommand{\i}{\mathrm i}
\renewcommand{\b}{\textbf }
\newcommand{\n}{\bm{\hat{n}}}
\newcommand{\ta}{\bm{\hat{\tau}}}
\newcommand{\di}{\bm{\hat{d}}}
\newcommand{\xhat}{\widehat{\bm{x}}}
\newcommand{\ka}{\bm\kappa}
\newcommand{\mi}{\bm\mu}
\newcommand{\zita}{\bm\zeta}
\newcommand{\ksi}{\bm\xi}
\newcommand{\bb}[1]{\bm{\mathcal{#1}}}
\newcommand{\norm}[1]{\lVert #1 \rVert}
\newaliascnt{proposition}{lemma}
\newaliascnt{theorem}{lemma}
\newtheorem{theorem}[theorem]{Theorem}
\newaliascnt{assumption}{lemma}
\newtheorem{assumption}[assumption]{Iterative Scheme}
\newaliascnt{remark}{lemma}
\newtheorem{remark}[remark]{Remark}
\theoremstyle{nonumberplain}
\newtheorem{proof}{Proof}
\newcommand{\logmessage}[1]{\@latex@warning{#1}}
\providecommand{\keywords}[1]{\small\textbf{Keywords} #1}
\begin{document}

\title{The inverse scattering problem by an elastic inclusion}

\author[1]{Roman Chapko\thanks{chapko@lnu.edu.ua}}
\author[2]{Drossos Gintides\thanks{dgindi@math.ntua.gr}}
\author[3]{Leonidas Mindrinos\thanks{leonidas.mindrinos@univie.ac.at}}
\affil[1]{\small Faculty of Applied Mathematics and Informatics, Ivan Franko National University of Lviv, Ukraine.}
\affil[2]{Department of Mathematics, National Technical University of Athens, Greece.}
\affil[3]{Computational Science Center, University of Vienna,  Austria.}

\renewcommand\Authands{ and }
\normalsize

\date{}

\maketitle

\begin{abstract}
In this work we consider the inverse elastic scattering problem by an inclusion in two dimensions. The elastic inclusion is placed in an isotropic homogeneous elastic medium. The inverse problem, using the third Betti's formula (direct method), is equivalent to a system of four integral equations that are non linear with respect to the unknown boundary. Two equations are on the boundary and two on the unit circle where the far-field patterns of the scattered waves lie. We solve iteratively the system of integral equations by linearising only the far-field equations. Numerical results are presented that illustrate the feasibility of the proposed method.

\vspace{0.2cm}
\keywords{linear elasticity, inverse scattering problem, integral equation method}
\end{abstract}

\section{Introduction}
The inverse scattering problem consists on finding the shape and the location of an obstacle by measuring the scattered wave, close or far from the scatterer. Depending on the kind of illumination (acoustic, electromagnetic or elastic) and the properties of the obstacle (soft, hard, penetrable or not) one faces different kind of problems regarding the unique solvability of the problem and the numerical scheme for approximating the solution. 

In this work we place the obstacle in a two-dimensional homogeneous and isotropic elastic medium and we assume that it is penetrable, a so-called inclusion, with different Lam\'e parameters from the exterior domain. We consider as incident wave an elastic longitudinal or transversal wave that after interacting with the boundary of the medium is split into an interior (transmitted) and a scattered wave, propagating in the inclusion and the exterior, respectively.  The scattered wave is also decomposed into a longitudinal and a transversal wave with different wavenumbers that behave like spherical waves with different polarizations at infinity. 

Before considering the inverse problem, we should have a good knowledge of the direct problem, which is to find the scattered field and its far-field patterns from the knowledge of the obstacle and the incident wave. The direct problem is linear and
well posed for smooth obstacles \cite{Mar90}. The inverse problem that we consider here can be seen as a continuation of \cite{GinMin11} where the inverse problem was examined for a rigid scatterer and a cavity. The problem of detecting an elastic inclusion has been also considered for given boundary measurements \cite{AleMorRos04, AlvMar09}, using the factorization method \cite{ChaKirAnaGinKir07}, the linear sampling method \cite{PelSev03, Sev05}, a gradient descent method \cite{Lou15, PelKleBer00} or the probing method \cite{KarSin15}.

Here, we solve this inverse problem by formulating an equivalent system of non-linear integral equations that has to be solved with a regularization iterative scheme due to its ill-posedness. To avoid an inverse crime we consider the direct method (Betti's formula) for the inverse problem and we keep the indirect approach as proposed in \cite{Mar90} for the direct problem. This method was introduced in \cite{KreRun05} and then applied in many different problems, see for instance \cite{ChaIvaPro13, IvaJoh07, IvaJoh08, LiSun15, QinCak11} for some recent applications. The system consists of four equations, two on the unknown boundary taking advantage of the boundary conditions and two on the unit circle assuming that we know the far-field pattern of the scattered fields for one or more incident waves.

Even though the first two equations are well-posed because of the equivalence to the system of integral equations for the direct problem, the last two inherit the ill-posedness of the system due to the smoothness of the far-field operators. Following \cite{AltKre12a,JohSle07} we apply a two-step method meaning, we first solve the well-posed subsystem to obtain the corresponding densities and then we solve the linearized (with respect to the boundary) ill-posed subsystem to update the initial approximation of the radial function. We consider Tikhonov regularization and the normal equations are solved by the conjugate gradient method.

The paper is organized as follows: in \autoref{formulation} we formulate the problem in two dimensions and in \autoref{direct} we present the direct scattering problem, the elastic potential and the equivalent system of integral equations. The inverse problem is stated in \autoref{inverse} where we construct an equivalent system of integral equation using the direct method. In \autoref{method} the two-step method for the parametrized form of the system  and the necessary Fr\'echet derivatives of the operators are presented. In the last section, the numerical examples give satisfactory results and demonstrate the applicability of the proposed method.

\section{Problem formulation}\label{formulation}
We consider the scattering of time-harmonic elastic waves by an isotropic and homogeneous elastic inclusion $D_i \subset \R ^2$ with smooth boundary $\Gamma$ described by the Lam\'e parameters $\lambda_i,$ $\mu_i$ and the constant density $\rho_i$. The exterior of $D_i$ described by $D_e = \R^2 \setminus \overline{D}_i$ is filled with an isotropic and homogeneous elastic medium with Lam\'e constants $\lambda_e,$ $\mu_e$ and density $\rho_e$. Henceforth, $j=i,e$ counts for the interior $D_i$ and the exterior domain $D_e ,$ respectively. In addition, we assume that $\lambda_j +\mu_j >0, \, \mu_j >0$ and $\rho_j >0.$

By $\ta$ we define the unit tangent vector to $\Gamma$ and by $\n =\b Q \cdot\ta$ the unit normal vector directed on $D_e$, where $\b Q$ denotes the unitary matrix
\begin{equation*}
\b Q= \begin{pmatrix}
\phantom{-}0 & 1\\ -1 & 0
\end{pmatrix}.
\end{equation*}

The incident field is either a longitudinal plane wave
\[
\b u^{inc}_p (\b x ;  \di) = \di \,  e^{\i k_{p,e} \di \cdot \b x} ,
\]
or a transversal plane wave
\[
\b u^{inc}_s (\b x ;  \di) = -\b Q \cdot \di \,  e^{\i k_{s,e} \di \cdot \b x} ,
\]
where $\di$ is the propagation vector and the wavenumbers are given by
\[
k_{p,j}^2:=\frac{\rho_j \omega^2}{\lambda_j+2\mu_j},\quad k_{s,j}^2:=\frac{\rho_j \omega^2}{\mu_j},
\]
where $\omega >0$ is the circular frequency. In the following, $\alpha = p,s$ counts for the longitudinal and the transversal waves, respectively.

The scattering of $\b u^{inc}$ by the inclusion generates the scattered field $\b u^e , \b x \in D_e$ and the transmitted field  $\b u^i , \b x \in D_i .$ Both of them satisfy the Navier equation in their domains of definition
\begin{equation}\label{eqNavier}
\bm \Delta^\ast_j \b u^j + \rho_j \omega^2 \b u^j = \b 0, \quad \b x \in D_j ,
\end{equation}
with the Lam\'e operator defined by $\bm \Delta^\ast_j := \mu_j \bm \Delta_j + (\lambda_j + \mu_j) \bm \nabla \bm \nabla \cdot .$ If $\b u^j$ satisfies  \eqref{eqNavier}, due to the Helmholtz decomposition, it can be written as a sum of a longitudinal and a transversal wave
\[
\b u^j = \b u^j_p + \b u^j_s , 
\]
which are defined by
\[
\b u^j_p := -\frac1{k^2_{p ,j}} \bm \nabla \bm \nabla \cdot \b u^j, \quad \b u^j_s := \b u^j -\b u^j_p .
\]

On the boundary we impose transmission conditions of the form
\begin{subequations}\label{eqTrans}
\begin{alignat}{2}
\b u^i &= \b u^e + \b u^{inc},  & \quad & \mbox{on  }  \Gamma ,\\
\b T^i  \b u^i &= \b T^e ( \b u^e + \b u^{inc} ), & & \mbox{on  }   \Gamma,
\end{alignat}
\end{subequations}
where the boundary traction operator $\b T^j$ is given by
\begin{equation*}
\b T^j \b u^j :=\lambda_j \n (\bm \nabla\cdot \b u^j ) +2\mu_j \left( \n \cdot \bm \nabla \right)\b u^j  +\mu_j (\b Q \cdot \n ) \bm \nabla\cdot\left(
\b Q \cdot \b u^j \right) .
\end{equation*}
The field $\b u^e$ is required to satisfy also the Kupradze radiation condition
\begin{equation}\label{eqRadiation}
\lim_{r\rightarrow\infty} \sqrt{r}\left(\frac{\partial \b u^e_\alpha}{\partial r}-\i k_{\alpha ,e} \b u^e_\alpha \right)=\b 0, \quad  r=\left|\b x\right| ,
\end{equation}
uniformly in all directions. Then, the direct elastic scattering problem reads: Given $D_j$ (geometry and elastic parameters) and the incident field $\b u^{inc}_\alpha ,$ solve the boundary value problem \eqref{eqNavier} - \eqref{eqRadiation} to obtain $\b u^j .$

At this point we recall that any solution of \eqref{eqNavier} satisfying \eqref{eqRadiation} has an asymptotic behaviour of the form
\begin{equation*}
\b u^e_\alpha =\frac{e^{ik_{\alpha ,e} r}}{\sqrt{r}}\left\{\b u_\alpha^\infty (\xhat )+\mathcal O \left(\frac1{r}\right)\right\}, \quad r \rightarrow \infty, 
\end{equation*}
uniformly in all directions $\xhat =\b x/r \in S,$ where $S$ denotes the unit circle. The pair $(\b u_p^\infty , \b u_s^\infty)$ is called the far-field patterns of the scattered field $\b u^e .$

\section{The direct elastic scattering problem}\label{direct}

To represent the solution of the direct and the inverse problem as a combination of an elastic single- and a double-layer potential we first introduce the fundamental solution of the Navier equation
\begin{align*}
\bm \Phi_j (\b x,\b y) =  \frac{\i}{4\mu_j}H_0^{(1)}(k_{s,j}\left|\b x-\b y\right|)\b I + \frac{\i}{4\rho_j \omega^2}\bm \nabla\bm \nabla^\top \left[H_0^{(1)}(k_{s,j}\left|\b x-\b y\right|)-H_0^{(1)}(k_{p,j}\left|\b x-\b y\right|)\right]
\end{align*}
in terms of the the identity matrix $\b I$ and the Hankel function $H_0^{(1)}$ of order zero and of the first kind. The Green's tensor can be transformed into
\begin{equation*}
\bm \Phi_j (\b x,\b y)=\Phi_{1,j} (\left|\b x-\b y\right|)\b I+\Phi_{2,j} (|\b x - \b y|) \b J(\b x -\b y),
\end{equation*}
where the functions $\Phi_{1,j},\, \Phi_{2,j}: \R \rightarrow \C$ are given by \cite{Kre96}
\begin{subequations}\label{eqPhi1}
\begin{alignat}{2}
\Phi_{1,j} (t) &= \frac{\i}{4\mu_j}H_0^{(1)}(k_{s,j} t)-\frac{\i}{4\rho_j \omega^2 t}\left[k_{s,j} H_1^{(1)}(k_{s,j} t)-k_{p,j} H_1^{(1)}(k_{p,j} t)\right] ,  \\
\Phi_{2,j} (t) &= \frac{\i}{4\rho_j \omega^2}\left[\frac{2k_{s,j}}{t}H_1^{(1)}(k_{s,j} t)-k_{s,j}^2 H_0^{(1)}(k_{s,j} t)-\frac{2k_{p,j}}t H_1^{(1)}(k_{p,j} t)+k_{p,j}^2 H_0^{(1)}(k_{p,j} t)\right]
\end{alignat}
\end{subequations}
with $H_1^{(1)} = - H_0^{(1)'}$ and 
\begin{equation*}
\b J(\b x)=\frac{\b x \b x^\top}{|\b x |^2}, \quad \b x\neq \b 0
\end{equation*}
in terms of a dyadic product of $\b x$ with its transpose $\b x^\top$.  Then, for the vector density $\bm \varphi \in [C^{0,a}(\Gamma) ]^2$, $0<a \leq 1,$ we introduce the elastic single-layer potential
\begin{equation}\label{eqSingle}
(\b S_j \bm \varphi)(\b x) =\int_{\Gamma} \bm \Phi _j (\b x,\b y)\cdot \bm \varphi(\b y)ds(\b y), \quad \b x \in D _j \backslash \Gamma ,
\end{equation}
and the elastic double-layer potential 
\begin{equation}\label{eqDouble}
(\b D_j \bm\varphi)(\b x) =\int_{\Gamma}\left[\b T^j_y \bm \Phi _j (\b x,\b y) \right]^\top\cdot \bm \varphi(\b y)ds(\b y), \quad \b x \in D _j \backslash \Gamma .
\end{equation}
It is well known that $\b S_j$ and $\b T^j_x \b D_j$ are continuous in $\R^2$ but both $\b D_j$ and $\b T^j_x \b S_j$ satisfy the following jump relations \cite{Kup79}
\begin{subequations}\label{eqJump}
\begin{alignat}{2}
\b D_j\bm \varphi &= \left(\pm \frac12 \b I +\b K_j\right)\bm \varphi,  & \quad & \mbox{on  }  \Gamma  ,\\
\b T^j_x \b S_j \bm \varphi &= \left(\mp \frac12 \b I +\b L_j\right)\bm \varphi,  & \quad & \mbox{on  }  \Gamma , \\
\b T^j_x \b D_j\bm \varphi &= \b N_j \bm \varphi, & \quad & \mbox{on  }  \Gamma ,
\end{alignat}
\end{subequations}
where the upper (lower) sign corresponds to the limit $\b x\rightarrow \Gamma$ from $D_e$ ($D_i$), and the integral operators are defined by
\begin{subequations}\label{eqTraceIntegral}
\begin{alignat}{2}
(\b K_j\bm \varphi)(\b x) &=\int_{\Gamma}\left[\b T^j_y \bm \Phi _j (
\b x,\b y) \right]^\top\cdot \bm\varphi(\b y)ds(\b y),  & \quad & \b x \in \Gamma ,\\
(\b L_j \bm \varphi)(\b x) &=\int_{\Gamma} \b T^j_x \bm\Phi _j (\b x,\b y) \cdot \bm \varphi(\b y)ds(\b y),  & \quad & \b x \in \Gamma , \\
(\b N_j\bm \varphi)(\b x) &= \b T^j_x \int_{\Gamma}\left[\b T^j_y \bm \Phi _j (
\b x,\b y) \right]^\top\cdot \bm\varphi(\b y)ds(\b y),  & \quad & \b x \in \Gamma .
\end{alignat}
\end{subequations}
All the above integrals are well defined and in particular the operator $\b S_j$ for $\b x \in \Gamma$ is weakly singular, the operators $\b K_j , \, \b L_j$ are singular and $\b N_j$ admits a hypersingular kernel. From the asymptotic behaviour of the Hankel functions we can compute also the far-field patterns of the single- \eqref{eqSingle} and double-layer potential \eqref{eqDouble} \cite{ChaKreMon00, Kre96}
 \begin{subequations}\label{eqFarPotentials}
\begin{alignat}{2}
(\b S^\infty_\alpha \bm \varphi)(\xhat) &=\beta_\alpha \int_{\Gamma} \b J_\alpha (\xhat) \cdot \bm \varphi(\b y) \, e^{-\i k_{\alpha , e} \xhat \cdot \b y}ds(\b y), & \quad & \xhat \in S,\\
(\b D^\infty_\alpha \bm\varphi)(\xhat) &=\gamma_\alpha \int_{\Gamma}\b J_\alpha (\xhat) \cdot \b F (\xhat , \b y) \cdot\bm \varphi(\b y) \, e^{-\i k_{\alpha , e} \xhat \cdot \b y}ds(\b y), & \quad & \xhat \in S,
\end{alignat}
\end{subequations}
with the coefficients
 \begin{equation*}
\begin{aligned}
\beta_p &= \frac{e^{\i\pi/4}}{\lambda_e +2\mu_e}\frac1{\sqrt{8\pi k_{p,e}}}, & \beta_s &= \frac{e^{\i\pi/4}}{\mu_e}\frac1{\sqrt{8\pi k_{s,e}}}, \\
\gamma_p &= \frac{e^{-\i\pi/4}}{\lambda_e +2\mu_e}\sqrt{ \frac{ k_{p,e}}{8\pi }}, & \gamma_s &= \frac{e^{-\i\pi/4}}{\mu_e}\sqrt{ \frac{ k_{s,e}}{8\pi }}, \\
\end{aligned}
\end{equation*}
and the matrices $\b J_p (\xhat) = \b J (\xhat), \,  \b J_s (\xhat) = \b I - \b J (\xhat)$ and
\[ 
\b F (\xhat , \b y) = \lambda_e \xhat \,\n (\b y)^\top + \mu_e \n (\b y) \,\xhat^\top + \mu_e  (\n (\b y) \cdot \xhat ) \b I . 
\]

Considering the indirect integral equation method, we search the solution of the direct scattering problem in the form 
\begin{equation}\label{eqFieldDire}
\b u^j(\b x)=(\b D_j \bm \varphi _j)(\b x)+(\b S_j \bm \psi _j)(\b x), \quad \b x \in D_j .
\end{equation}
To simplify the above representation, we set 
\[
\bm \varphi _j (\b x) = \tau_j \bm \varphi (\b x) , \quad \bm \psi_j (\b x) = \bm \psi (\b x), \quad \tau_j =\frac{\lambda_{j} +2\mu_{j}}{\mu_{j} (\lambda_{j} +\mu_{j})}
\]
and the formula \eqref{eqFieldDire} is reduced to
\begin{equation}\label{eqFieldDire2}
\b u^j(\b x)=\tau_j (\b D_j \bm \varphi)(\b x)+(\b S_j \bm \psi )(\b x), \quad \b x \in D_j .
\end{equation}
Using this representation, applying the boundary conditions \eqref{eqTrans} and the jump relations \eqref{eqJump} we see that the densities $\bm \varphi , \,\bm \psi$ satisfy the system of integral equations 
\begin{equation}\label{eqSystemDire}
\begin{pmatrix}
\b I +\b L_i -\b L_e & \tau_i \b N_i -\tau_e \b N_e \\
\b S_i -\b S_e &  -\frac{\tau_i +\tau_e}{2}\b I+\tau_i \b K_i -\tau_e \b K_e
\end{pmatrix}
\begin{pmatrix}
\bm \psi \\ \bm \varphi  
\end{pmatrix} =
\begin{pmatrix}
\b T^e \b u^{inc} |_\Gamma \\
\b u^{inc} |_\Gamma
\end{pmatrix} .
\end{equation}
The following result regarding uniqueness and existence was proved in \cite{Mar90}.
\begin{theorem}\label{th1}
The system of integral equations \eqref{eqSystemDire} has precisely one solution $(\bm \varphi , \bm \psi)$, with $\bm \varphi \in [C^{1,a}(\Gamma)]^2$ and $\bm \psi \in [C^{0,a} (\Gamma)]^2$. Moreover, the corresponding displacement fields \eqref{eqFieldDire2} solve the direct scattering problem \eqref{eqNavier} - \eqref{eqRadiation}.
\end{theorem}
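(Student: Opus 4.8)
The plan is to establish unique solvability of \eqref{eqSystemDire} by the Riesz--Fredholm theory and then to read off the direct scattering problem from the jump relations \eqref{eqJump}. I would proceed in three steps: (i) show that the matrix integral operator on the left of \eqref{eqSystemDire} is a Fredholm operator of index zero on $X:=[C^{0,a}(\Gamma)]^2\times[C^{1,a}(\Gamma)]^2$; (ii) show that it is injective; (iii) conclude by the Fredholm alternative. The last assertion of the theorem is then automatic: the potentials $\b S_j,\b D_j$ are built from $\bm\Phi_j$, hence solve \eqref{eqNavier} in $D_j$, and for $j=e$ they satisfy the radiation condition \eqref{eqRadiation} since $\bm\Phi_e$ involves outgoing Hankel functions; moreover, inserting \eqref{eqFieldDire2} into \eqref{eqTrans} and using \eqref{eqJump} yields exactly \eqref{eqSystemDire}, so a solution of \eqref{eqSystemDire} produces, via \eqref{eqFieldDire2}, a solution of \eqref{eqNavier}--\eqref{eqRadiation}.

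For step (i), the diagonal entries $\b I$ and $-\tfrac{\tau_i+\tau_e}{2}\b I$ are isomorphisms of the two factors of $X$; here $\tau_j>0$, since $\mu_j>0$ and $\lambda_j+2\mu_j=(\lambda_j+\mu_j)+\mu_j>0$, hence $\tau_i+\tau_e>0$. The operator $\b S_i-\b S_e$ is weakly singular, and the rescaling of the double-layer density by $\tau_j$ in \eqref{eqFieldDire2} is made precisely so that the principal (strongly singular, respectively hypersingular) parts of the $j=i$ and $j=e$ contributions to $\b L_i-\b L_e$, $\tau_i\b K_i-\tau_e\b K_e$ and $\tau_i\b N_i-\tau_e\b N_e$ are compatible; using the explicit kernels \eqref{eqPhi1}--\eqref{eqTraceIntegral} and the small-argument expansions of the Hankel functions, the left-hand side of \eqref{eqSystemDire} is then an elliptic system of boundary integral operators with invertible principal symbol and vanishing index, as worked out in \cite{Mar90}. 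Consequently bijectivity of \eqref{eqSystemDire} reduces to injectivity, and existence of a solution of the stated regularity follows from the Fredholm alternative since the right-hand side $(\b T^e\b u^{inc}|_\Gamma,\b u^{inc}|_\Gamma)$ is smooth.

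For step (ii), let $(\bm\varphi,\bm\psi)\in X$ solve the homogeneous system and put $\b u^i,\b u^e$ as in \eqref{eqFieldDire2}. By \eqref{eqJump} the homogeneous system states precisely that $\b u^i=\b u^e$ and $\b T^i\b u^i=\b T^e\b u^e$ on $\Gamma$, so $(\b u^i,\b u^e)$ solves the homogeneous transmission problem. Betti's second formula in $D_i$ applied to the pair $(\b u^i,\overline{\b u^i})$, together with the reality of $\lambda_i,\mu_i,\rho_i$, gives $\Im\int_\Gamma\overline{\b u^i}\cdot\b T^i\b u^i\,ds=0$; Betti's formula in $D_e\cap B_R$ (with $B_R\supset\overline{D_i}$ a large ball) applied to $\b u^e$, combined with \eqref{eqRadiation}, gives as $R\to\infty$ that $\Im\int_\Gamma\overline{\b u^e}\cdot\b T^e\b u^e\,ds=c_p\,\norm{\b u_p^\infty}^2_{L^2(S)}+c_s\,\norm{\b u_s^\infty}^2_{L^2(S)}$ with $c_p,c_s>0$. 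The two boundary integrals coincide by the transmission conditions, so both far-field patterns vanish, whence $\b u^e\equiv\b 0$ in $D_e$ by Rellich's lemma and unique continuation. Then $\b u^i$ has vanishing Cauchy data on $\Gamma$, so Holmgren's uniqueness theorem and the interior analyticity of solutions of \eqref{eqNavier} force $\b u^i\equiv\b 0$ in $D_i$.

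It remains to deduce $\bm\varphi=\bm\psi=\b 0$. Evaluating the single- and double-layer potentials in the opposite domains and combining \eqref{eqJump} with the now-vanishing one-sided traces of \eqref{eqFieldDire2}, one checks that $\b w^i:=\tau_i\b D_i\bm\varphi+\b S_i\bm\psi$, regarded in $D_e$, is a radiating solution of the Navier equation with parameters $\lambda_i,\mu_i,\rho_i$ and Cauchy data $(\b w^i,\b T^i\b w^i)|_\Gamma=(\tau_i\bm\varphi,-\bm\psi)$, while $\b w^e:=\tau_e\b D_e\bm\varphi+\b S_e\bm\psi$, regarded in $D_i$, solves the Navier equation with parameters $\lambda_e,\mu_e,\rho_e$ and Cauchy data $(-\tau_e\bm\varphi,\bm\psi)$. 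Betti's formula for $\b w^e$ in $D_i$ (real parameters) gives $\Im\int_\Gamma\overline{\bm\varphi}\cdot\bm\psi\,ds=0$; the far-field energy identity for the radiating field $\b w^i$ in $D_e$, evaluated on its Cauchy data, then equals $-\tau_i\Im\int_\Gamma\overline{\bm\varphi}\cdot\bm\psi\,ds=0$, and since this quantity is a positive-definite quadratic form in the two far-field patterns of $\b w^i$, both of them vanish, so $\b w^i\equiv\b 0$ in $D_e$ by Rellich's lemma; therefore $\tau_i\bm\varphi=\b w^i|_\Gamma=\b 0$ and $\bm\psi=-\b T^i\b w^i|_\Gamma=\b 0$. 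This proves injectivity and hence the theorem. I expect the genuine obstacle to be step (i) — isolating the principal symbol of the $2\times2$ operator system, checking that the weights $\tau_j$ render it invertible, and verifying that its index vanishes (this is where the precise form of $\tau_j$ and the cancellation between the $i$- and $e$-kernels enter); step (ii) is the by-now standard two-potential energy argument sketched above.
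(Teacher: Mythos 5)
The paper itself does not prove this theorem: it is quoted verbatim from Martin \cite{Mar90}, and the text simply cites that reference. So your proposal cannot be compared with an in-paper argument; judged on its own, it is a sound sketch of what is essentially the standard (and presumably Martin's) route: Riesz--Fredholm theory for the $2\times 2$ system plus uniqueness of the homogeneous transmission problem. Your injectivity argument is correct and complete in outline: the homogeneous system \eqref{eqSystemDire} is, via \eqref{eqJump}, exactly the statement that the fields \eqref{eqFieldDire2} satisfy the homogeneous transmission conditions; the energy/radiation identity and Rellich's lemma give $\b u^e\equiv\b 0$, vanishing Cauchy data and unique continuation give $\b u^i\equiv\b 0$; and the complementary-domain potentials $\b w^i$, $\b w^e$ with Cauchy data $(\tau_i\bm\varphi,-\bm\psi)$ and $(-\tau_e\bm\varphi,\bm\psi)$ (these jump computations are right) force $\bm\varphi=\bm\psi=\b 0$, using $\tau_i>0$.

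The one place where your sketch is looser than it should be is step (i). The weights $\tau_j$ produce an exact cancellation only in the hypersingular block: as the paper notes in Remark \ref{remark1} and verifies in Section \ref{examples} via $c_j=\tau_j^{-1}$, the static parts satisfy $\tau_i\b N_i^{(0)}-\tau_e\b N_e^{(0)}=0$, so $\tau_i\b N_i-\tau_e\b N_e$ is weakly singular. By contrast, the Cauchy-singular principal parts of $\b K_j$ and $\b L_j$ carry material-dependent coefficients that do not cancel in $\b L_i-\b L_e$ and $\tau_i\b K_i-\tau_e\b K_e$ for general $(\lambda_j,\mu_j)$, so the system is not a compact perturbation of an invertible diagonal operator; one genuinely needs the symbol and index theory for systems of singular integral equations (this is the content of \cite{Mar90}), together with the mapping properties on $[C^{1,a}(\Gamma)]^2\times[C^{0,a}(\Gamma)]^2$ that make $\b N_j$ meaningful. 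Since you explicitly defer this to \cite{Mar90}, your proposal is acceptable as a sketch, but be aware that "the principal parts are compatible" overstates what the $\tau_j$-scaling achieves; the real work in that reference is precisely the invertible-symbol, index-zero verification you flag as the genuine obstacle.
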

Then, the solution of the direct problem \eqref{eqFieldDire2} provides us with the far-field pattern $(\b u_p^\infty , \b u_s^\infty)$ given by
\begin{equation}\label{eqFar}
\b u_\alpha^\infty (\xhat) = \int_\Gamma \b J_\alpha (\xhat) \cdot  \left[ 
\tau_e \gamma_\alpha \b F (\xhat , \b y) \cdot\bm \varphi(\b y) + \beta_\alpha  \bm \psi (\b y)\right] e^{-\i k_{\alpha , e} \xhat \cdot \b y}ds(\b y),  \quad \xhat \in S,
\end{equation}
where we have used the asymptotic forms \eqref{eqFarPotentials} and that $\bm \varphi ,\, \bm \psi$ are the solutions of \eqref{eqSystemDire}. 

\begin{remark}\label{remark1}
The choice of $\tau_j$ is not random since, as we are going to see later, the combination $\tau_i \b N_i -\tau_e \b N_e$ turns out to be a weakly singular operator, reducing the hypersingularity of $\b N_j. $
\end{remark}

For the numerical implementation, we present also different representations of the solutions in order to distinguish from the formulas we will derive later for the solution of the inverse problem, even though, we are going to consider the direct method. We do not consider the solvability of the above systems. Let the solution of the direct problem be given by
\begin{equation}\label{eqSingleSol}
\b u^j (\b x) =  (\b S_j \bm \psi_j ) (\b x), \quad \b x \in D_j .
\end{equation}
Then, the densities satisfy the system of equations
\begin{equation*}
\begin{pmatrix}
\b S_i  & - \b S_e \\
\frac12 \b I +\b L_i & \frac12 \b I -\b L_e  
\end{pmatrix}
\begin{pmatrix}
\bm \psi_i \\ \bm \psi_e 
\end{pmatrix} =
\begin{pmatrix}
\b u^{inc} |_\Gamma \\
\b T^e \b u^{inc} |_\Gamma
\end{pmatrix} 
\end{equation*}
and we obtain the far-field patterns $\b u_\alpha^\infty (\xhat ) = (\b S_\alpha^\infty \bm \psi_e ) (\xhat ).$ If we consider the representations 
\begin{equation}\label{eqDoubleSol}
\b u^j (\b x)=  (\b D_j \bm \psi_j )(\b x),\quad \b x \in D_j ,
\end{equation}
the densities satisfy
\begin{equation*}
\begin{pmatrix}
-\frac12 \b I +\b K_i & -\frac12 \b I -\b K_e   \\
\b N_i  & - \b N_e 
\end{pmatrix}
\begin{pmatrix}
\bm \psi_i \\ \bm \psi_e 
\end{pmatrix} =
\begin{pmatrix}
\b u^{inc} |_\Gamma \\
\b T^e \b u^{inc} |_\Gamma
\end{pmatrix},
\end{equation*}
resulting to the far-field patterns $\b u_\alpha^\infty (\xhat ) = (\b D_\alpha^\infty \bm \psi_e ) (\xhat ).$

\section{The inverse elastic scattering problem}\label{inverse}
Now we can state the inverse problem, which reads: Find the  shape and the position of the inclusion $D_i$ (i.e. reconstruct the boundary) from the knowledge of the far-field patterns $(\b u_p^\infty (\xhat), \b u_s^\infty (\xhat))$ for all $\xhat \in S,$ for one incident plane wave $\b u^{inc}_\alpha$ with direction $\di$ either longitudinal or transverse. In general, the unique solvability of the inverse problem for one or even for a finite number of incident waves is an open problem. Uniqueness for the transmission problem exists only for infinitely many incident waves \cite{HahHsi93}. There exist also results for a rigid scatterer, local uniqueness in $\R^2$ \cite{GinMin11} and measuring only $\b u_s^\infty$  for a transversal incident plane wave and simple geometries in $\R^3$ \cite{HuKirSin13}.

\subsection{The integral equation method}

To solve numerically this problem, we consider the non-linear integral equation method, introduced in \cite{KreRun05}, but here we apply the direct method in contrast to the forward problem. We recall for $\b v, \b w \in [C^2 (\overline{D}_j)]^2$ the third Betti's formula
\[
\int_{D_j} (\b v \cdot \bm \Delta_j^\ast \b w - \b w \cdot \bm \Delta_j^\ast \b v) d\b x = \int_\Gamma (\b v \cdot \b T^j \b w - \b w \cdot \b T^j \b v) ds (\b x).
\]
Using the definitions \eqref{eqSingle} and \eqref{eqDouble}, we consider the above formula once for the field $\b u^e$ and the tensor $\bm \Phi_e$ in $D_e ,$ and then for $\b u^{inc}, \, \bm \Phi_e$ in $D_e$ to obtain
\begin{subequations}
\begin{alignat}{2}
\b u^e (\b x) &= (\b D_e \b u^e ) (\b x) - (\b S_e ( \b T^e \b u^e ) )(\b x), & \quad & \b x\in D_e , \label{eqBetti1} \\
\b 0 &= (\b D_e \b u^{inc} ) (\b x) - (\b S_e ( \b T^e \b u^{inc} ) )(\b x), & \quad & \b x\in D_e .\label{eqBetti2}
\end{alignat}
\end{subequations}
We define $\b u^t :=\b u^e +\b u^{inc}$ and by adding \eqref{eqBetti1} and \eqref{eqBetti2} we obtain
\begin{equation}\label{eqBetti3}
\b u^e (\b x) = (\b D_e \b u^t ) (\b x) - (\b S_e ( \b T^e \b u^t ) )(\b x), \quad \b x\in D_e .
\end{equation}
Similarly, for $\b u^i ,\, \bm \Phi_i$ in $D_i ,$ the third Betti's formula results to
\begin{align}
 -\b u^i (\b x) = (\b D_i \b u^i ) (\b x) - (\b S_i ( \b T^i \b u^i ) )(\b x) = (\b D_i \b u^t ) (\b x) - (\b S_i ( \b T^e \b u^t ) )(\b x), \quad \b x\in D_i , \label{eqBetti4}
 \end{align}
where for the last equality we have used the transmission conditions \eqref{eqTrans}. We set $\ka = \b u^t |_\Gamma$ and  $\mi = \b T^e\b u^t |_\Gamma$ and letting $\b x\rightarrow \Gamma$ in the above representations, taking the traction and considering the jump relations \eqref{eqJump}, we get
 \begin{subequations}
\begin{alignat*}{2}
\b u^j (\b x) &= (\tfrac12 \b I \pm \b K_j )\ka (\b x) \mp (\b S_j \mi )(\b x), & \quad & \b x\in \Gamma , \\
\b T^j\b u^j (\b x) &= \pm (\b N_j \ka )(\b x) + (\tfrac12 \b I \mp \b L_j )\mi (\b x), & \quad & \b x\in \Gamma . 
\end{alignat*}
\end{subequations}
We consider \eqref{eqTrans} to obtain
 \begin{subequations}\label{eqInverse1}
\begin{alignat}{2}
 (\tfrac12 \b I - \b K_e )\ka  + \b S_e \mi  &= \b u^{inc} , & \quad & \mbox{on  }  \Gamma , \label{eqTrans1} \\
 (\tfrac12 \b I + \b K_i )\ka  - \b S_i \mi  &= \b 0 , & \quad & \mbox{on  }  \Gamma ,\label{eqTrans2} \\ 
- \b N_e \ka  + (\tfrac12 \b I + \b L_e )\mi  &= \b T^e \b u^{inc} , & \quad & \mbox{on  }  \Gamma ,\label{eqTrans3} \\
\b N_i \ka  + (\tfrac12 \b I - \b L_i )\mi  &= \b 0 , & \quad & \mbox{on  }  \Gamma .\label{eqTrans4}
\end{alignat}
\end{subequations}
In addition, given the far-field operators \eqref{eqFarPotentials} and the representation \eqref{eqBetti3} of the exterior field we observe that the unknown boundary $\Gamma$ and the densities satisfy the (far-field) equation
\begin{equation*}
\begin{pmatrix}
\b D^\infty_p \vspace{0.1cm}\\ 
\b D^\infty_s
\end{pmatrix} \ka  - \begin{pmatrix}
\b S^\infty_p \vspace{0.1cm}\\ 
\b S^\infty_s
\end{pmatrix} \mi = \begin{pmatrix}
\b u^\infty_p \vspace{0.1cm}\\ 
\b u^\infty_s
\end{pmatrix}, \quad \mbox{on  }   S,
\end{equation*}
or in compact form
\begin{equation}\label{eqFarInv}
\bb{D}^\infty \ka  -\bb{S}^\infty \mi =\bb{U}^\infty
\end{equation}
where the right-hand side is the known far-field patterns from the direct problem. We observe that we have six equations \eqref{eqInverse1} and \eqref{eqFarInv} for the three unknowns $\Gamma , \ka$ and $\mi.$ In order to take advantage of the well-posedness of the direct problem, we consider the linear combinations  \eqref{eqTrans1} +  \eqref{eqTrans2} and  $\tau_e \cdot$\eqref{eqTrans3} +  $\tau_i \cdot$\eqref{eqTrans4} for the equations on the boundary and we keep the overdetermined far-field equation. 
Then, we can state the following theorem as a formal formulation of the inverse problem.

\begin{theorem} Given an incident field $\b u_\alpha^{inc}, \, \alpha = p$ or $s$ and the far-field patterns $\bm{\mathcal{U}}^\infty,$ for all $\xhat \in S,$ if $\Gamma$ and the vector densities $\ka , \mi$ satisfy the system of integral equations
 \begin{subequations}\label{eqInverseFinal}
\begin{alignat}{2}
 ( \b I + \b K_i - \b K_e )\ka  + (\b S_e - \b S_i )\mi  &= \b u^{inc} |_\Gamma, \label{eqInverseFinal1} \\
(\tau_i \b N_i- \tau_e \b N_e ) \ka  + (\tfrac{\tau_i + \tau_e}2 \b I + \tau_e\b L_e -\tau_i\b L_i )\mi  &= \tau_e \b T^e \b u^{inc}|_\Gamma, \label{eqInverseFinal2} \\
\bb{D}^\infty \ka  -\bb{S}^\infty \mi &=\bb{U}^\infty , \label{eqInverseFinal3}
\end{alignat}
\end{subequations}
then, $\Gamma$ solves the inverse problem.
\end{theorem}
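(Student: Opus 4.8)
The plan is to show that any solution $(\Gamma,\ka,\mi)$ of \eqref{eqInverseFinal} makes the inclusion bounded by $\Gamma$, illuminated by $\b u^{inc}_\alpha$, radiate exactly the prescribed far-field pattern $\bb{U}^\infty$, which is precisely the meaning of $\Gamma$ solving the inverse problem (whether this $\Gamma$ is \emph{the} sought obstacle is the separate, open, uniqueness question discussed above). First I would fix $\Gamma$ and let $(\b u^i,\b u^e)$ be the unique solution of the direct transmission problem \eqref{eqNavier}--\eqref{eqRadiation} for this boundary, which exists by Theorem~\ref{th1}; set $\b u^t=\b u^e+\b u^{inc}$ and let $\bb{U}^\infty_\Gamma$ denote the far-field pattern of $\b u^e$. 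Putting $\widetilde{\ka}:=\b u^t|_\Gamma$ and $\widetilde{\mi}:=\b T^e\b u^t|_\Gamma$, the computation that led from the third Betti formula to \eqref{eqBetti3}--\eqref{eqBetti4}, and then, after passing to the boundary by means of \eqref{eqJump}, to the identities \eqref{eqTrans1}--\eqref{eqTrans4}, shows that $(\widetilde{\ka},\widetilde{\mi})$ satisfies \eqref{eqTrans1}--\eqref{eqTrans4}; forming \eqref{eqTrans1} + \eqref{eqTrans2} and $\tau_e \cdot$\eqref{eqTrans3} + $\tau_i \cdot$\eqref{eqTrans4} shows it satisfies \eqref{eqInverseFinal1}--\eqref{eqInverseFinal2}, while substituting $(\widetilde{\ka},\widetilde{\mi})$ into the far-field forms \eqref{eqFarPotentials} of the potentials occurring in \eqref{eqBetti3} gives $\bb{D}^\infty\widetilde{\ka}-\bb{S}^\infty\widetilde{\mi}=\bb{U}^\infty_\Gamma$.

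The heart of the proof --- and the step I expect to require the most care --- is then to show that $(\ka,\mi)=(\widetilde{\ka},\widetilde{\mi})$, that is, that for the fixed boundary $\Gamma$ the subsystem \eqref{eqInverseFinal1}--\eqref{eqInverseFinal2} has a unique solution. By Remark~\ref{remark1} the combination $\tau_i\b N_i-\tau_e\b N_e$ is only weakly singular, so this is a system of integral equations of the second kind with compact off-diagonal entries and compact perturbations of a constant multiple of the identity on the diagonal, hence a Fredholm operator of index zero on $[C^{1,a}(\Gamma)]^2\times[C^{0,a}(\Gamma)]^2$; unique solvability thus reduces to injectivity. Injectivity I would obtain as in the well-posedness analysis of the direct problem recalled after Theorem~\ref{th1} and in \cite{Mar90}: to a solution $(\ka_0,\mi_0)$ of the homogeneous subsystem one attaches the elastic layer potentials $\b v^e=\b D_e\ka_0-\b S_e\mi_0$ in $D_e$ and $\b v^i=-(\b D_i\ka_0-\b S_i\mi_0)$ in $D_i$, which solve \eqref{eqNavier} in their respective domains, with $\b v^e$ satisfying \eqref{eqRadiation}; one rewrites the two homogeneous equations through the jump relations \eqref{eqJump} as relations between the one-sided traces and tractions of $\b v^e$ and $\b v^i$ on $\Gamma$, and then uses the uniqueness of the elastic transmission problem --- read off from the third Betti formula as an energy identity over $D_i\cup D_e$, together with the radiation condition \eqref{eqRadiation} and Rellich's lemma --- to force $\b v^e\equiv\b 0$ and $\b v^i\equiv\b 0$; the jump relations then give $\ka_0=\mi_0=\b 0$. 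Since $(\ka,\mi)$ solves the subsystem by hypothesis and $(\widetilde{\ka},\widetilde{\mi})$ solves it by the previous step, it follows that $\ka=\widetilde{\ka}$ and $\mi=\widetilde{\mi}$. The careful matching of signs and of the limits from $D_i$ and from $D_e$ in this injectivity argument is the only point I expect to be non-routine.

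Finally, once $\ka=\widetilde{\ka}$ and $\mi=\widetilde{\mi}$, the far-field equation \eqref{eqInverseFinal3} becomes $\bb{U}^\infty=\bb{D}^\infty\widetilde{\ka}-\bb{S}^\infty\widetilde{\mi}=\bb{U}^\infty_\Gamma$, so the inclusion with boundary $\Gamma$ produces exactly the prescribed far-field patterns for the incident wave $\b u^{inc}_\alpha$; that is, $\Gamma$ solves the inverse problem.
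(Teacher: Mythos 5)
Your argument is correct and is essentially the one the paper intends (the paper states the theorem as a ``formal formulation'' and leaves the proof implicit in the derivation via Betti's formula together with the remark that the boundary subsystem is well-posed): you solve the direct problem for the candidate $\Gamma$, note that the trace pair $\widetilde{\ka}=\b u^t|_\Gamma$, $\widetilde{\mi}=\b T^e\b u^t|_\Gamma$ satisfies \eqref{eqInverseFinal1}--\eqref{eqInverseFinal2} and reproduces the far field of $\b u^e$ through \eqref{eqFarPotentials}, invoke unique solvability of the subsystem to identify $(\ka,\mi)=(\widetilde{\ka},\widetilde{\mi})$, and read off from \eqref{eqInverseFinal3} that $\Gamma$ generates the prescribed data. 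The only place you diverge is the justification of unique solvability of the subsystem: the paper disposes of this by observing that \eqref{eqInverseFinal1}--\eqref{eqInverseFinal2} is equivalent to \eqref{eqSystemDire} and citing Martin, whereas you sketch an independent Riesz--Fredholm-plus-injectivity argument with layer potentials $\b v^e=\b D_e\ka_0-\b S_e\mi_0$, $\b v^i=-(\b D_i\ka_0-\b S_i\mi_0)$. Be aware that this sub-argument is more delicate than your sketch suggests: the homogeneous subsystem consists only of the two \emph{combined} equations, and the jump relations \eqref{eqJump} applied to your potentials give, e.g., $\b v^e|_+ +\b v^i|_- = 2\ka_0$ from the first combined equation rather than a homogeneous transmission condition, so the sign bookkeeping you defer does not close in the naive way. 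The cleaner route, and in effect what the paper's citation encodes, is duality: since $\b S_j$ and $\b N_j$ have symmetric kernels while $\b K_j$ and $\b L_j$ are mutually adjoint, the operator matrix of \eqref{eqInverseFinal1}--\eqref{eqInverseFinal2} is the adjoint of the matrix in \eqref{eqSystemDire} composed with the invertible sign change $\mathrm{diag}(\b I,-\b I)$, so bijectivity transfers from \autoref{th1} by the Fredholm alternative; with that substitution your proof is complete.
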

The integral operators involved in \eqref{eqInverseFinal} are linear with respect to the densities but non-linear with respect to the boundary $\Gamma.$ The subsystem \eqref{eqInverseFinal1} - \eqref{eqInverseFinal2} is equivalent to \eqref{eqSystemDire}, thus well-posed as already proved \cite{Mar90}. The ill-posedness of the inverse problem is then due to the smooth kernels of the far-field operators in \eqref{eqInverseFinal3}.

In general, there exist three different iterative methods to solve the system \eqref{eqInverseFinal} by linearization:
\begin{enumerate}
\item[A.] Given initial guesses for the boundary and the densities, we linearize all three equations in order to update all the unknowns.
\item[B.] Given initial guess for the boundary, we solve the subsystem \eqref{eqInverseFinal1} - \eqref{eqInverseFinal2} to obtain the densities. Then, keeping the densities fixed we solve the linearized equation \eqref{eqInverseFinal3} to obtain the update for the boundary.
\item[C.] Given initial guesses for the densities, we solve the far-field equation 
\eqref{eqInverseFinal3} to obtain $\Gamma$ and then we solve the linearized form
of \eqref{eqInverseFinal1} - \eqref{eqInverseFinal2} to obtain the densities. 
\end{enumerate}
The linearization, using Fr\'echet derivatives of the operators, and the regularization of the ill-posed equations are needed in all methods. However, the iterative method A requires the calculation of the Fr\'echet derivatives of the operators with respect to all the unknowns and the selection of two regularization parameters at every step. Thus, we prefer to use one of the so-called two-step methods B or C. Between the two methods, it is obvious that the second method is preferable since we solve first a well-posed linear system and then we linearize only the far-field operators (operators with smooth and simple kernels). 
From now on, we focus on Method B, a method introduced in \cite{JohSle07} and then applied in different problems, see for instance \cite{AltKre12a, Lee15} for some recent applications.

\section{The two-step method}\label{method}
To analyse further the Method B, we consider the following parametrization for the boundary
\[
\Gamma = \{ \b z (t) =  r (t) (\cos t,\, \sin t) : t \in [0,2\pi]\},
\]
where $\b z : \R \rightarrow \R^2$ is a $C^2$-smooth, $2\pi$-periodic parametrization. We assume in addition that $\b z$  is injective in $[0,2\pi),$ that is $\b z'  (t) \neq 0,$ for all $t\in [0,2\pi].$ The non-negative function $r$ represents the radial distance of $\Gamma$ from the origin. Then, we define
\[
\ksi (t) = \ka (\b z (t)), \quad    \zita (t) = \mi (\b z (t)), \quad t \in [0,2\pi]
\]
and the parametrized form of \eqref{eqInverseFinal} is given by
\begin{equation}\label{eqFinal}
\bb A (r; \ksi) + \bb B (r; \zita ) = \bb C , 
\end{equation}
where
\begin{equation*}
\bb{A}=\begin{pmatrix}
\bb{A}_1  \\
\bb{A}_2\\
\bb{A}_3 \end{pmatrix}, \quad \bb{B}=\begin{pmatrix}
\bb{B}_1  \\
\bb{B}_2\\
\bb{B}_3 \end{pmatrix}, \quad \bb{C}=\begin{pmatrix}
\bb{C}_1   \\
\bb{C}_2 \\
\bb{C}_3 \end{pmatrix},
\end{equation*}
with the parametrized operators
\begin{equation*}
\begin{aligned}
(\bb{A}_1 (r; \ksi))(t) &= \ksi (t)+\int_0^{2\pi} \left[\b T^i_{\b z(\tau)} \bm \Phi _i (t,\tau)- \b T^e_{\b z(\tau)} \bm \Phi _e (t,\tau) \right]^\top  \cdot \ksi(\tau) |\b z'  (\tau)| d\tau, \\
(\bb{A}_2 (r; \ksi))(t) &=   \int_0^{2\pi} \left( \tau_i  \b T^i_{\b z(t)} \left[\b T^{i}_{\b z (\tau)} \bm \Phi _i (t,\tau) \right]^\top - \tau_e \b T^{e}_{\b z(t)} \left[\b T^{e}_{\b z(\tau)} \bm \Phi _{e} (t,\tau) \right]^\top \right)  \cdot \ksi(\tau) |\b z'  (\tau)| d\tau, \\
(\bb{A}_3 (r; \ksi))(t) &=  (\bb D^\infty (r; \ksi))(t), \\
(\bb{B}_1 (r; \zita ))(t) &=\int_0^{2\pi} \left[ \bm\Phi _{e} (t,\tau)-\bm \Phi _{i}(t,\tau) \right] \cdot \zita(\tau )  |\b z'  (\tau)| d\tau, \\
(\bb{B}_2 (r; \zita ))(t) &= \frac{\tau_i +\tau_e}{2} \zita (t) + \int_0^{2\pi}\left(  \tau_e \b T^{e}_{\b z(t)} \bm\Phi _e (t,\tau)  - \tau_i  \b T^i_{\b z(t)} \bm \Phi_i (t,\tau) \right) \cdot  \zita(\tau)|\b z'  (\tau)|  d\tau , \\
 (\bb{B}_3 (r; \zita ))(t) &=  -(\bb S^\infty (r; \zita ))(t),
\end{aligned}
\end{equation*}
where $\bm \Phi _j (t,\tau) := \bm \Phi _j (\b z(t),\b z(\tau)),$
\begin{align*}
(\b D^\infty_\alpha (r; \ksi))(t) &=\gamma_\alpha \int_0^{2\pi} \b J_\alpha (\xhat (t)) \cdot \b F (\xhat(t) , \b z (\tau )) \cdot\bm \ksi(\tau) \, e^{-\i k_{\alpha , e} \xhat (t) \cdot \b z (\tau)} |\b z'  (\tau)| d\tau, \\
(\b S^\infty_\alpha (r; \zita ))(t) &=\beta_\alpha \int_0^{2\pi} \b J_\alpha (\xhat (t)) \cdot \zita (\tau) \, e^{-\i k_{\alpha , e} \xhat (t) \cdot \b z (\tau)} |\b z'  (\tau)| d\tau
\end{align*}
and the right-hand side
\begin{equation*}
(\bb{C}_1 (r))(t) = \b u^{inc} (\b z(t)), \quad   (\bb{C}_2 (r)) (t) = \tau_e (\b T^{e}\b u^{inc}) (\b z(t)), \quad \bb{C}_3  (t) = \bb{U}^\infty (\xhat (t)).
\end{equation*}
\begin{remark}
The operators $\bb{A}_k ,\, \bb{B}_k ,\, k=1,2,3$ act on the densities and the first variable $r$ shows the dependence on the unknown parametrization of the boundary. Only $\bb{C}_3$ is independent of the radial function. 
\end{remark}

The two-step method for the system \eqref{eqFinal} reads as follows:

\begin{assumption}\label{IterationScheme}
Initially, we give an approximation of the radial function $r^{(0)}$. Then, in the $k$th iteration step:
\begin{enumerate}[i.]
\item We assume that we know $r^{(k-1)}$ and we solve the subsystem
\begin{equation}\label{sub_eq}
\begin{pmatrix}
\bb{A}_1  \\
\bb{A}_2 \end{pmatrix} (r^{(k-1)}; \ksi) + \begin{pmatrix}
\bb{B}_1  \\
\bb{B}_2 \end{pmatrix}(r^{(k-1)}; \zita) = \begin{pmatrix}
\bb{C}_1   \\
\bb{C}_2  \end{pmatrix} (r^{(k-1)}),
\end{equation}
to obtain the densities $\ksi^{(k)}, \, \zita^{(k)}. $
\item Then, keeping the densities fixed, we linearize the third equation of \eqref{eqFinal}, namely
\begin{equation}\label{linear_eq}
\bb A_3 (r^{(k-1)}; \ksi^{(k)}) + (\bb A'_3 (r^{(k-1)}; \ksi^{(k)})) (q) + \bb B_3 (r^{(k-1)}; \zita^{(k)} )  + (\bb B'_3 (r^{(k-1)}; \zita^{(k)} )(q)= \bb C_3 . 
\end{equation}
We solve this equation for $q$ and we update the radial function $r^{(k)} = r^{(k-1)} +q.$
\end{enumerate}
The iteration stops when a suitable stopping criterion is satisfied.
\end{assumption}
The function $q$ stands for the radial function of the perturbed boundary
\[
\Gamma_q = \{ \b q (t) =  q (t) (\cos t,\, \sin t) : t \in [0,2\pi]\},
\]
and the Fr\'echet derivatives of the operators are calculated by formally differentiating their kernels with respect to $r$ \cite{Cha95}
\[
(\bb A'_3 (r ; \ksi )) (q) =  \begin{pmatrix}
\left.(\b D^{\infty}_p \right.' (r; \ksi))(q) \\
\left.(\b D^\infty_s\right.' (r; \ksi))(q)
\end{pmatrix}, \quad
(\bb B'_3 (r ; \zita )) (q) = -\begin{pmatrix}
\left.(\b S^{\infty}_p\right.' (r; \zita))(q) \\
\left.(\b S^\infty_s\right.' (r; \zita))(q)
\end{pmatrix}
\]
with
\begin{align*}
(\left.(\b D^{\infty}_\alpha \right.' (r; \ksi))(q))(t) &= \gamma_\alpha \int_0^{2\pi} \b J_\alpha (\xhat (t)) \cdot \b G_\alpha (\xhat(t) , \b z (\tau ),\b q (\tau )) \cdot\bm \ksi(\tau) \, e^{-\i k_{\alpha , e} \xhat (t) \cdot \b z (\tau)}d\tau,  \\
(\left.(\b S^{\infty}_\alpha \right.' (r; \zita ))(q))(t) &= \beta_\alpha \int_0^{2\pi}  g_\alpha (\xhat(t) , \b z (\tau ),\b q (\tau )) \b J_\alpha (\xhat (t)) \cdot \zita (\tau) \, e^{-\i k_{\alpha , e} \xhat (t) \cdot \b z (\tau)}d\tau ,
\end{align*}
where
\begin{align*}
\b G_\alpha (\xhat(t) , \b z (\tau ),\b q (\tau )) &= \lambda_e  \xhat (t) \b v (\tau)^\top+\mu_e \b v (\tau) \xhat (t)^\top +\mu_e (\b v (\tau)\cdot \xhat(t) ) \b I \\
&\phantom{=} -\i k_{\alpha , e} (\xhat (t) \cdot \b q (\tau) )\left|\b z' (\tau)\right| \b F (\xhat(t) , \b z (\tau )) 
\end{align*}
for $\b v(\tau) := \b Q \cdot \b q'(\tau)$ and 
\[
g_\alpha (\xhat(t) , \b z (\tau ),\b q (\tau )) = -\i k_{\alpha , e} (\xhat (t) \cdot \b q (\tau) )\left|\b z' (\tau)\right| + \frac{\b z'(\tau) \cdot \b q' (\tau) }{\left|\b z' (\tau)\right|} .
\]

To show injectivity of the integral operators involved in \eqref{linear_eq}, we consider a simplified linearization. Assuming that $\b z'$ is known, we linearize with respect to $\b z$ only, viewing $\b z'$ as independent of $\b z,$ resulting to
 \begin{align*}
(\left.(\b D^{\infty}_\alpha \right.' (r; \ksi))(q))(t) &= -\i k_{\alpha , e} \gamma_\alpha \int_0^{2\pi}  (\xhat (t) \cdot \b q (\tau) ) \b J_\alpha (\xhat (t)) \cdot \b F (\xhat(t) , \b z (\tau )) \cdot\bm \ksi(\tau)   e^{-\i k_{\alpha , e} \xhat (t) \cdot \b z (\tau)}d\tau,  \\
(\left.(\b S^{\infty}_\alpha \right.' (r; \zita ))(q))(t) &= -\i k_{\alpha , e} \beta_\alpha \int_0^{2\pi}  (\xhat (t) \cdot \b q (\tau) )\b J_\alpha (\xhat (t)) \cdot \zita (\tau) \, e^{-\i k_{\alpha , e} \xhat (t) \cdot \b z (\tau)}d\tau ,
\end{align*}
where now $\bm \ksi(\tau) := \bm \ksi(\tau) \left|\b z' (\tau)\right|$ and  $\bm \zita (\tau) := \bm \zita(\tau) \left|\b z' (\tau)\right|.$ In addition, we recall that for sufficiently small $q,$ the perturbed boundary $\Gamma_q$ can be represented by 
$\b q (t) = \tilde q (t) \b Q \cdot \b z'(t),$ $t \in [0,2\pi]$ \cite{IvaKre06}. Now we can state the following theorem considering the above formulas for the Fr\'echet derivatives and as unknown the function $\tilde q.$

\begin{theorem}
Let $\bm \ksi, \, \bm \zita$ solve \eqref{sub_eq} and let $r$ be the radial function of the unperturbed boundary $\Gamma$. If $\tilde q \in C^2 [0,2\pi]$ satisfies the homogeneous form of equation  \eqref{linear_eq}, meaning
\begin{equation}\label{homo}
(\bb A'_3 (r; \ksi) + \bb B'_3 (r; \zita ))(\tilde q)= 0,
\end{equation}
then $\tilde q =0.$
\end{theorem}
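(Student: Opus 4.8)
The plan is to translate the homogeneous far-field equation \eqref{homo} into a statement about a radiating elastic field and then invoke a Rellich-type uniqueness argument together with unique continuation. First I would observe that, with $\bm\ksi,\bm\zita$ solving \eqref{sub_eq}, the single- and double-layer far-field operators applied to these densities reproduce the far-field pattern of the scattered field $\b u^e$ associated with the (unperturbed) boundary $\Gamma$; this is exactly \eqref{eqFarInv}. The linearized operators $(\bb D^\infty_\alpha)'$ and $(\bb S^\infty_\alpha)'$, in the simplified form stated just before the theorem, differ from the unlinearized ones only through the extra factor $-\i k_{\alpha,e}(\xhat\cdot\b q(\tau))$ in the integrand. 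Hence the combination $(\bb A'_3(r;\ksi)+\bb B'_3(r;\zita))(\tilde q)$ is itself the far-field pattern, evaluated at $\xhat$, of a \emph{new} radiating solution of the Navier equation in $D_e$ — namely the field represented by the same layer potentials but with densities multiplied pointwise by the scalar $-\i k_{\alpha,e}(\xhat\cdot\b q(\tau))$. The subtlety that this factor depends on $\xhat$ as well as on $\tau$ must be handled: I would rewrite $\xhat\cdot\b q(\tau)$ using $\b q(\tau)=\tilde q(\tau)\,\b Q\cdot\b z'(\tau)$, split it componentwise, and recognize the resulting $\xhat$-dependent prefactors ($\xhat_1,\xhat_2$) as exactly the far-field signature of differentiating the fundamental solution's plane-wave factor — i.e. these correspond to genuine radiating fields (derivatives of single/double layer potentials with density $\tilde q\,\b Q\cdot\b z'\,\bm\ksi$, etc.), so the homogeneous equation says a certain radiating elastic field has vanishing far-field pattern.

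Next I would apply Rellich's lemma and the radiation condition \eqref{eqRadiation}: a radiating solution of the Navier equation in $D_e$ whose far-field patterns $(\b u_p^\infty,\b u_s^\infty)$ both vanish must vanish identically in $D_e$. Then I would use the jump relations \eqref{eqJump} across $\Gamma$ to transfer this vanishing to a homogeneous interior problem for the corresponding field in $D_i$; because the densities entering the construction are built from $\tilde q$ times the fixed, known densities $\bm\ksi,\bm\zita$ (which are nonzero in general), the traces of the interior field on $\Gamma$ are controlled by $\tilde q$ and by boundary differential expressions in $\bm\ksi,\bm\zita$. Invoking the uniqueness for the interior transmission-type boundary value problem — equivalently, the invertibility asserted in \autoref{th1} for the direct system \eqref{eqSystemDire}, of which \eqref{sub_eq} is the parametrized restatement — I would conclude that the constructed densities themselves vanish: $\tilde q(\tau)\,\bm\ksi(\tau)=0$ and $\tilde q(\tau)\,\bm\zita(\tau)=0$ (up to the boundary-derivative terms, which I would absorb by a Holmgren/unique-continuation step since $\tilde q\in C^2$).

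From $\tilde q\,\bm\ksi\equiv 0$ and $\tilde q\,\bm\zita\equiv 0$ one cannot immediately conclude $\tilde q\equiv 0$ pointwise unless $\bm\ksi$ or $\bm\zita$ is nowhere zero; instead I would argue that on the (open, dense) set where $(\bm\ksi,\bm\zita)\neq 0$ we get $\tilde q=0$, hence by continuity $\tilde q\equiv 0$ on $[0,2\pi]$. To rule out the degenerate possibility that $(\bm\ksi,\bm\zita)$ vanishes on an interval, I would note that such vanishing would, through the layer-potential representation \eqref{eqFieldDire2} and the analyticity of the fields away from $\Gamma$ combined with the boundary conditions \eqref{eqTrans}, force the incident field $\b u^{inc}_\alpha$ to be compatible with a trivial scattered field — which is impossible for a genuine plane wave and a nontrivial inclusion with different Lamé parameters.

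The main obstacle I anticipate is the first paragraph's bookkeeping: carefully identifying the $\xhat$-weighted linearized integrals with far-field patterns of bona fide radiating elastic fields, so that Rellich applies. Concretely, the factor $(\xhat\cdot\b q(\tau))e^{-\i k_{\alpha,e}\xhat\cdot\b z(\tau)}$ is (up to constants) $\partial$ of $e^{-\i k_{\alpha,e}\xhat\cdot\b z(\tau)}$ in the direction $\b q(\tau)$, which is the far-field kernel of a layer potential whose density involves $\b q\cdot\bm\nabla$ of the fundamental solution — this must be matched against the known expansions \eqref{eqFarPotentials}, accounting for the matrices $\b J_\alpha(\xhat)$ and $\b F(\xhat,\b y)$ and the fact that $\b J_p+\b J_s=\b I$ so the $p$- and $s$-parts together reconstruct a single vector field. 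Once that identification is secure, the rest is a standard chain: Rellich $\Rightarrow$ vanishing in $D_e$ $\Rightarrow$ (jump relations + \autoref{th1} uniqueness) vanishing of the constructed densities $\Rightarrow$ $\tilde q\equiv 0$.
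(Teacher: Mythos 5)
Your first two steps match the paper's own argument: you correctly identify the linearized far-field expressions as the far-field patterns of a radiating elastic field $V$ in $D_e$, built from derivative (with respect to the boundary point) layer potentials with kernels formed from $\bm\Phi_e$ and densities $\tilde q\,|\b z'|\,\ksi$ and $\tilde q\,|\b z'|\,\zita$, and you then apply Rellich's lemma to conclude $V\equiv 0$ in $D_e$. Up to the bookkeeping you yourself flag (which the paper handles by noting that $\b F$ is independent of $\b z$, so the $\b z$-derivative only hits the exponential), this is exactly the paper's construction.

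The genuine gap is in your next step. You propose to pass from $V\equiv 0$ in $D_e$ to $\tilde q\,\ksi\equiv 0$ \emph{and} $\tilde q\,\zita\equiv 0$ by ``jump relations plus the uniqueness of \autoref{th1}.'' That does not work: $V$ is not a solution of the transmission problem \eqref{eqSystemDire} — it is built solely from derivatives of the exterior kernel $\bm\Phi_e$, there is no paired interior field satisfying \eqref{eqTrans}, and the operators arising from its traces are not those of the direct system, so \autoref{th1} gives you nothing here. Moreover, the simultaneous vanishing of both products is stronger than what can be extracted (the second, less singular potential alone vanishing in $D_e$ would at best give information modulo possible interior eigenvalues) and is not needed. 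The paper's key idea, following Ivanyshyn--Kress, is instead a singularity-order argument: the first kernel in $V$ is hypersingular while the second has a lower-order (logarithmic-type) singularity, so letting $\b x\to\Gamma$ in $V\equiv 0$ forces only $\ksi(t)\,\tilde q(t)=0$ for a.e.\ $t$. The proof is then finished by observing that $\ksi=\b u^t|_\Gamma$ cannot vanish on any open arc of $\Gamma$ — by Holmgren's theorem and unique continuation for the Navier equation — whence $\tilde q\equiv 0$. Your closing argument (that $(\ksi,\zita)$ cannot vanish on an interval because the scattered field would be trivial) gestures in this direction but is not a substitute for the Holmgren/unique-continuation step, and your claim that the non-vanishing set is ``open, dense'' is unsupported; in any case the missing ingredient is the hypersingularity comparison that isolates $\ksi\tilde q$, which your route never produces.
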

\begin{proof}
We follow the ideas presented in \cite{IvaKre06} for the Laplace operator. Equation \eqref{homo} is equivalent to
\begin{equation}\label{homo2}
(\left.\b D^{\infty}_\alpha \right.' (r; \ksi) - \left.\b S^{\infty}_\alpha \right.' (r; \zita))(\tilde q)= 0, \quad \alpha = p,s .
\end{equation}
We introduce the function
\begin{align*}
V (\b x) &= \int_0^{2\pi} \frac{\partial}{\partial \n (\b z (\tau)) } \left[\b T^e_{\b z(\tau)} \bm \Phi _e (\b x,\b z(\tau)) \right]^\top  \cdot \ksi (\tau)  \tilde{q} (\tau) \left|\b z' (\tau)\right| d\tau\\
&\phantom{=}-\int_0^{2\pi}  \frac{\partial}{\partial \n (\b z (\tau)) } \bm \Phi _e (\b x,\b z(\tau))\cdot \zita (\tau)  \tilde{q} (\tau) \left|\b z' (\tau)\right| d\tau,\quad \b x \in D_e ,
\end{align*}
that is a radiating solution of \eqref{eqNavier} in $D_e.$ The far-field patterns of $V$ are given by
\begin{align*}
V_\alpha^\infty (\xhat) &= \int_0^{2\pi} \frac{\partial}{\partial \b z (\tau) } \bm \Psi^\infty_\alpha (\xhat,\b z(\tau))  \cdot \ksi (\tau)  \tilde{q} (\tau) \left|\b z' (\tau)\right| d\tau\\
&\phantom{=}-\int_0^{2\pi}  \frac{\partial}{\partial \b z (\tau) }\bm \Phi^\infty_\alpha (\xhat,\b z(\tau))\cdot \zita (\tau)  \tilde{q} (\tau) \left|\b z' (\tau)\right| d\tau,\quad \xhat \in S ,
\end{align*}
where
$
\bm \Psi^\infty_\alpha (\xhat,\b z(\tau)) = \gamma_\alpha \b J_\alpha (\xhat) \cdot \b F (\xhat , \b z(\tau)) \, e^{-\i k_{\alpha , e} \xhat \cdot \b z(\tau)}
$
and $\bm \Phi^\infty_\alpha (\xhat,\b z(\tau)) = \beta_\alpha \b J_\alpha (\xhat)  \, e^{-\i k_{\alpha , e} \xhat \cdot \b z(\tau)}.$
We observe that $V^\infty_\alpha$ coincide with the left hand-side of \eqref{homo2} since also $\b F$ is independent of $\b z.$ Then, $V_\alpha^\infty  \equiv 0,$ and by Rellich's Lemma we get that $V (\b x)= 0, \, \b x \in D_e .$ 
In this equation the first integral has a hypersingular kernel and the second one a kernel with lower singularity. Since the fundamental solution of the Navier equation has the same (logarithmic) singularity as the fundamental solution of the Laplace equation, we can show that $\ksi (t) \tilde{q}(t) = 0$ for almost every $t\in [0,2\pi]$ \cite{IvaKre06}. An application of unique continuation and Holmgren's theorem \cite{KnoPay71} results to $\tilde{q}=0,$ since $\ksi$ cannot be zero on $\Gamma.$ 
\end{proof}

\section{Numerical implementation}\label{examples}
In this section we firstly justify numerically the convergence of the proposed scheme using analytic solutions of the direct problem and then we investigate the applicability of the \autoref{IterationScheme} for solving the inverse problem. We solve both integral equations systems using the Nystr\"om method. 

To handle the singularities of the kernels we consider the usual quadrature rules based on trigonometric interpolation \cite{Kre95, Kre99}. For smooth kernels we use the trapezoidal rule. The exact forms of the parametrized kernels are presented in \cite{Cha04, ChaKreMon00}. Thus, here we only briefly present the form of the kernel 
of the operator $\tau_i \b N_i -\tau_e \b N_e$ appearing in \eqref{eqSystemDire} and in $\bb A_2 .$ This combination of operators consists of two hypersingular terms but it turns out to be weakly singular, as discussed in \autoref{remark1}. We consider the following decomposition
\begin{equation}\label{traction_kernel}
\b T^j_{\b z(t)} \left[\b T^j_{\b z (\tau)} \bm \Phi _j (t,\tau) \right]^\top = \b T^j_{\b z(t)} \left[\b T^j_{\b z (\tau)} \left( \bm \Phi _j (t,\tau) -\bm \Phi^{(0)} _j (t,\tau)\right) \right]^\top + \b T^j_{\b z(t)} \left[\b T^j_{\b z (\tau)} \bm \Phi^{(0)}_j (t,\tau) \right]^\top ,
\end{equation}
where $\bm \Phi^{(0)}_j$ denotes the fundamental solution of the static ($\omega =0$) Navier equation. The first term is weakly singular and the second one preserves the hypersingularity. The advantage, of this decomposition, is that the second term coming from the static case is easier to handle by a Maue-type expression \cite{ChaKreMon00}, although it is not needed here. The integral operator with kernel the second term can be written as \cite[Equation 2.6]{Cha04}
\begin{align*}
(\b N^{(0)}_j (r; \ksi))(t)
 &= \int_0^{2\pi} \b T^j_{\b z(t)} \left[\b T^j_{\b z (\tau)} \bm \Phi^{(0)}_j (t,\tau) \right]^\top \cdot\bm \ksi(\tau) |\b z'  (\tau)| d\tau \\
 &= \frac{c_j}{2\pi \left|\b z' (t)\right|} \int_0^{2\pi} \left[\cot \frac{\tau -t}{2} \bm \ksi'(\tau) + \b K (t,\tau) \cdot\bm \ksi(\tau) \right] d\tau ,
 \end{align*}
for a smooth kernel $\b K$ independent of $D_j$ and $c_j = \mu_j (\lambda_j + \mu_j) / (\lambda_j + 2\mu_j) .$ Since $c_j = \tau^{-1}_j ,$ we see that
\[
\tau_i (\b N^{(0)}_i (r; \ksi))(t) - \tau_e (\b N^{(0)}_e (r; \ksi))(t) = 0.
\]
Then, the combination $\tau_i \b N_i -\tau_e \b N_e$ presents only weakly singularity due to the first term in \eqref{traction_kernel}. 

The error and convergence analysis of the proposed numerical method can be carried out based on the theory of operator approximations and on estimates for trigonometrical interpolation in Sobolev spaces \cite[Section 12.4]{Kre99}. This analysis shows that the applied method admits super-algebraic convergence and in the case of analytical data it convergences exponentially.

In the following examples, we consider three different parametrizations of the boundary curves.
%
A peanut-shaped boundary with radial function
\[
r (t) =  (0.5 \cos^2 t + 0.15 \sin^2 t)^{1/2}, \quad t \in [0,2\pi], 
\]
an apple-shaped boundary with radial function
\[
r (t) =  \frac{0.45 + 0.3 \cos t -0.1 \sin 2t}{1+0.7 \cos t}, \quad t \in [0,2\pi],
\]
and a kite-shaped boundary with parametrization
\[
\b z (t) =   (\cos t + 0.7 \cos 2t,\, 1.2\sin t), \quad t \in [0,2\pi].
\]

\subsection{Example with analytic solution}

\begin{figure}[t]
\begin{center}
\includegraphics[scale=0.52]{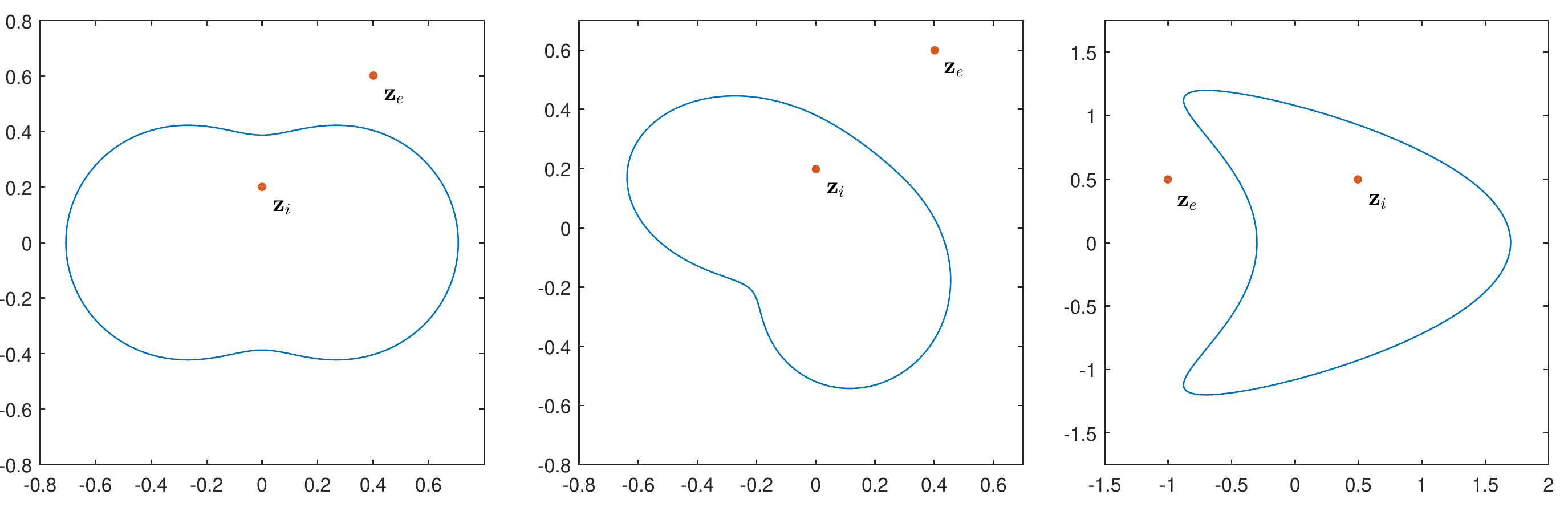}
\caption{The source points considered for the different boundary curves. }\label{Fig2s}
\end{center}
\end{figure}

We consider two arbitrary points $\b z_i \in D_i$ and $\b z_e \in D_e$ and  we define the vector-valued boundary functions
\begin{equation*}
\begin{aligned}
\b f &= [\bm \Phi_i (\b x,\b z_e ) ]_1  - [\bm \Phi_e (\b x,\b z_i )]_1 ,   & \mbox{on  } \Gamma,\\
\b g &= [\b T^i_x \bm \Phi_i (\b x,\b z_e ) ]_1  - [\b T^e_x \bm \Phi_e (\b x,\b z_i )]_1 , & \mbox{on  } \Gamma ,
\end{aligned}
\end{equation*}
where $[\cdot]_1$ denotes the first column of the tensor. Then, the fields
\[
\b u^i (\b x ) = [\bm \Phi_i (\b x,\b z_e ) ]_1   , \quad  \b x \in D_i , \qquad
\b u^e (\b x ) =  [\bm \Phi_e (\b x,\b z_i )]_1 ,  \quad  \b x \in D_e ,
\]
satisfy the Navier equations \eqref{eqNavier} and the transmission boundary conditions
\begin{equation*}
\begin{aligned}
\b u^i &=  \b u^e + \b f ,   & \mbox{on  } \Gamma,\\
\b T^i \b  u^i &= \b T^e \b u^e + \b g,  & \mbox{on  } \Gamma. 
\end{aligned}
\end{equation*}
In addition, $\b u^e$ satisfies  the Kupradze radiation condition \eqref{eqRadiation}. The exact values of the far-field patterns of $\b u^e$ considering the asymptotic behaviour of the Hankel function are given by
\[
\bm \phi^\infty_\alpha (\xhat,\b z_i ) = \beta_\alpha   \, e^{-\i k_{\alpha , e} \xhat \cdot \b z_i} [\b J_\alpha (\xhat)]_1 , \quad \alpha = p,s .
\]

To compute numerically the far-field patterns we consider the three different integral representations of the solution, meaning equations \eqref{eqSystemDire}, \eqref{eqSingleSol} and \eqref{eqDoubleSol} in order to show the efficiency of the numerical scheme. Then, the densities satisfy the corresponding systems of equations where we have to replace $\b u^{inc} |_\Gamma$ by $\b f$ and  $\b T^e \b u^{inc} |_\Gamma$ by $\b g.$

In all examples we choose the Lam\'e constants to be $\lambda_e = 1,\, \mu_e =1$ and $\rho_e =1$ in $D_e$ and $\lambda_i = 2,\, \mu_i =2$ and $\rho_i =1$ in $D_i$ and $\omega =8$ circular frequency. We consider the source points $\b z_i = (0, \, 0.2)$ and $\b z_e = (0.4, \, 0.6)$ for the peanut-shaped and the apple-shaped boundary and the points $\b z_i = (0.5, \, 0.5)$ and $\b z_e = (-1, \, 0.5)$ for the kite-shaped boundary, see \autoref{Fig2s}. 

The Tables \ref{table1}, \ref{table2} and \ref{table3} show some numerical values of the components of the  far-fields patterns at given directions. We consider different representations of the solution for the different boundary parametrizations to show that our approach is applicable in all cases. We see that the exponential convergence is clearly exhibited and we obtain the correct values related to the point source located in $D_i .$

\begin{table}[t!]
\begin{center}
\includegraphics[scale=0.8]{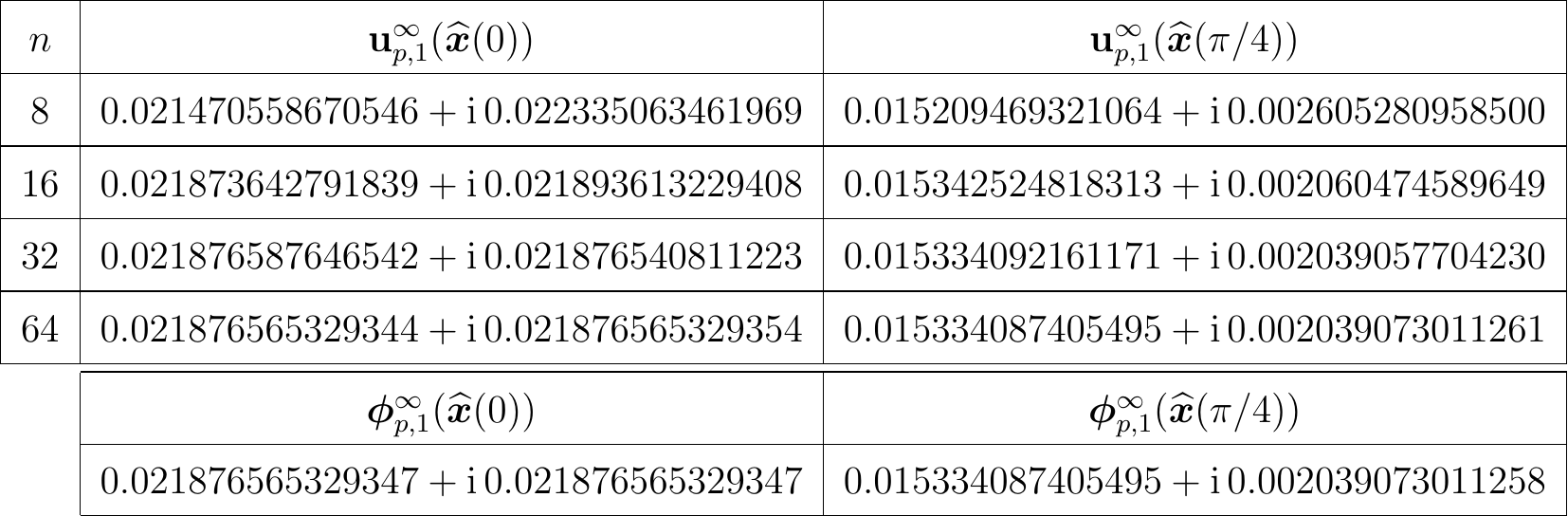}\vspace{0.3cm}
\caption{The computed and the exact longitudinal far-field for the peanut-shaped boundary considering the representation \eqref{eqFieldDire2}.}
\label{table1}
\end{center}
\end{table}

\begin{table}[t!]
\begin{center}
\includegraphics[scale=0.77]{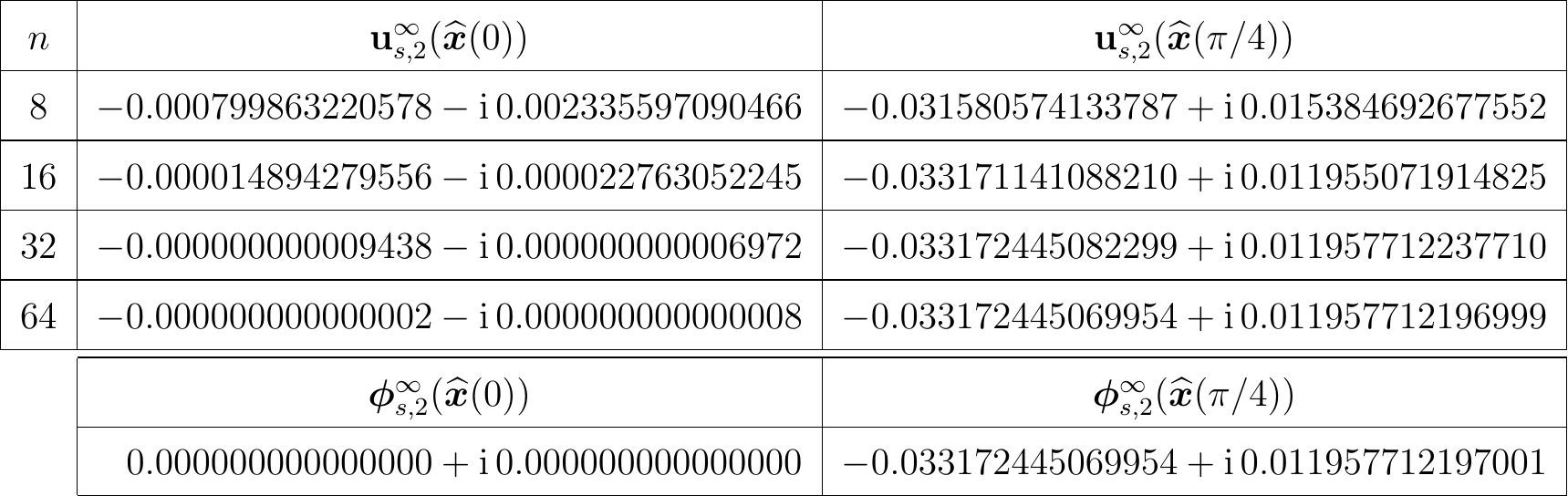}\vspace{0.3cm}
\caption{The computed and the exact transversal far-field for the apple-shaped boundary considering the representation \eqref{eqDoubleSol}.}
\label{table2}
\end{center}
\end{table}

\begin{table}[t]
\begin{center}
\includegraphics[scale=0.78]{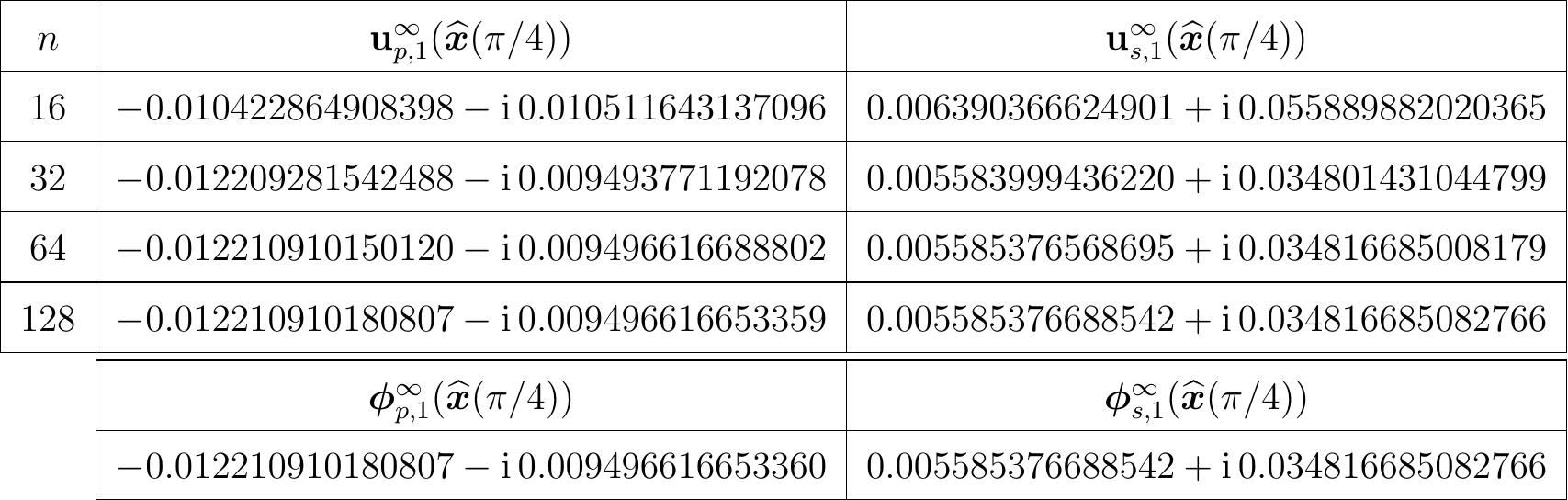}\vspace{0.3cm}
\caption{The computed and the exact far-fields for the kite-shaped boundary considering the representation \eqref{eqSingleSol}.}
\label{table3}
\end{center}
\end{table}

\subsection{The inverse problem}

To avoid an inverse crime in the following examples, the simulated far-field data were obtained by solving numerically the direct problem, replacing \eqref{eqFieldDire2} by \eqref{eqSingleSol} and considering double amount of collocation points.

We approximate the radial function $q$ by a trigonometric polynomial of the form
\[
q (t) \approx \sum_{k=0}^m a_k \cos kt + \sum_{k=1}^m b_k \sin kt , \quad t\in[0,2\pi],
\]
and we consider $2n$ equidistant points $t_j = j\pi/n, \, j=0,...,2n-1 .$

The subsystem \eqref{sub_eq} is well-posed and no special treatment is required. We solve the ill-posed linearized equation \eqref{linear_eq} by minimizing the Tikhonov functional of the corresponding discretized equation
\[
\norm{\b A \b T \b x - \b b}^2_2 + \lambda \norm{\b x}_p^p , \quad \lambda >0 .
\]
where $\b x \in \R^{(2m+1)\times 1}$ is the vector with the unknowns coefficients $a_0 ,...,a_m, b_1 ,..., b_m$ of the radial function, and $\b A \in \C^{8n \times 8n}, \, \b b \in \C^{8n \times 1}$ are given by
\begin{align*}
\b A_{kj} &= \b M_{\bb A'_3} (t_k , t_j) + \b M_{\bb B'_3} (t_k , t_j), \\
\b b_k &=  \bb C_3 (t_k) - (\b M_{\bb A_3} \cdot \ksi) (t_k)  - 
(\b M_{\bb B_3} \cdot \zita) (t_k),
\end{align*}
for $k,j = 0,...,2n-1,$ where $\b M_{\bb K}$ denotes the matrix related to the discretized kernel of the operator $\bb K.$ The multiplication matrix $\b T\in \R^{(8n)\times (2m+1)}$ stands for the trigonometric functions of the approximated radial function. Here $p\geq 0$ defines the corresponding Sobolev norm. Since $q$ has to be real valued we actually solve the following regularized equation
\begin{equation}\label{tikhonov}
\left(
\b T^\top \left(  \RE (\b A)^\top  \RE (\b A) +  \IM (\b A)^\top \IM (\b A)  \right) \b T + \lambda_k \b I_p \right) \b x = \b T^\top \left(  \RE (\b A)^\top  \RE (\b b) +  \IM (\b A)^\top \IM (\b b)  \right),
\end{equation}
on the $k$th step, where the matrix $\b I_p \in \R^{(2m+1)\times (2m+1)}$ corresponds to the Sobolev $H^p$ penalty term. We solve \eqref{tikhonov} using the conjugate gradient method.
We update the regularization parameter in each iteration step $k$ by 
\[
\lambda_k  = \lambda_0 \left( \frac23\right)^{k-1}, \quad k=1,2,...
\]
for some given initial parameter $\lambda_0 >0.$
To test the stability of the iterative method against noisy data, we add also noise to the far-field patterns with respect to the $L^2$ norm
\[
\bb{U}^\infty_\delta = \bb{U}^\infty + \delta \frac{\norm{\bb{U}^\infty}_2}{\norm{\bb{V}}_2} \bb V , 
\]
for a given noise level $\delta,$ where $ \bb V = \bb V_1 +\i \bb V_2 ,$ for $\bb V_1 , \bb V_2 \in \R^{8n \times 1}$ with components normally distributed random variables.

Already in the acoustic regime \cite{AltKre12a}, one incident wave does not provide satisfactory results, thus we have to generalize \autoref{IterationScheme} also for multiple illuminations $\b u^{inc}_l , \, l = 1,...,L .$
\begin{assumption}[Multiple illuminations]
Initially, we give an approximation of the radial function $r^{(0)}$. Then, in the $k$th iteration step:
\begin{enumerate}[i.]
\item We assume that we know $r^{(k-1)}$ and we solve the $L$ subsystems
\begin{equation}
\begin{pmatrix}
\bb{A}_1  \\
\bb{A}_2 \end{pmatrix} (r^{(k-1)}; \ksi_l ) + \begin{pmatrix}
\bb{B}_1  \\
\bb{B}_2 \end{pmatrix}(r^{(k-1)}; \zita_l ) = \begin{pmatrix}
\bb{C}_{1,l}   \\
\bb{C}_{2,l}  \end{pmatrix} (r^{(k-1)}), \quad l = 1,...,L 
\end{equation}
to obtain the densities $\ksi^{(k)}_l, \, \zita^{(k)}_l .$
\item Then, keeping the densities fixed, we solve the overdetermined version of the linearized third equation of \eqref{eqFinal}
\begin{equation*}
\begin{pmatrix}
\bb A'_3 (r^{(k-1)}; \ksi^{(k)}_1) + \bb B'_3 (r^{(k-1)}; \zita^{(k)}_1) \\
\bb A'_3 (r^{(k-1)}; \ksi^{(k)}_2) + \bb B'_3 (r^{(k-1)}; \zita^{(k)}_2) \\
\vdots \\
\bb A'_3 (r^{(k-1)}; \ksi^{(k)}_L) + \bb B'_3 (r^{(k-1)}; \zita^{(k)}_L)
\end{pmatrix}  q = \begin{pmatrix}
\bb C_{3,1} - \bb A_3 (r^{(k-1)}; \ksi^{(k)}_1) - \bb B_3 (r^{(k-1)}; \zita^{(k)}_1 ) \\
\bb C_{3,2} - \bb A_3 (r^{(k-1)}; \ksi^{(k)}_2) - \bb B_3 (r^{(k-1)}; \zita^{(k)}_2 ) \\
\vdots \\
\bb C_{3,L} - \bb A_3 (r^{(k-1)}; \ksi^{(k)}_L) - \bb B_3 (r^{(k-1)}; \zita^{(k)}_L )
\end{pmatrix}
\end{equation*}
for $q$ and we update the radial function $r^{(k)} = r^{(k-1)} +q.$
\end{enumerate}
The iteration stops when a suitable stopping criterion is satisfied.
\end{assumption}

\subsection{Numerical results}
In the following examples we choose the incident field to be a longitudinal plane wave with different incident directions given by
\[
\di_l = (\cos \tfrac{2\pi l}{L}, \, \sin \tfrac{2\pi l}{L}), \quad l=1,...,L.
\]
We choose the Lam\'e constants to be $\lambda_e = 1,\, \mu_e =1$ and $\rho_e =1$ in $D_e$ and $\lambda_i = 2,\, \mu_i =3$ and $\rho_i =1$ in $D_i$ and $\omega =8$ circular frequency. We set $n=64$ collocation points for the direct problem and $n=32$ for the inverse. The regularized equation \eqref{tikhonov} is solved for $p=1,$ meaning $H^1$ penalty term and for initial regularization parameter $\lambda_0 = 0.8.$ 

We present reconstructions for different boundary curves, different number of incident directions and initial guesses for exact and perturbed far-field data. When, we refer to noisy data, we have considered $\delta = 5\% .$  In all figures the initial guess is a circle with radius $r_0 ,$ a green solid line, the exact curve is represented by a dashed red line and the reconstructed by a solid blue line. The arrows denote the directions of the incoming incident fields.

In the first example we consider the peanut-shaped boundary. The reconstructions for $m=3$ coefficients, two incident fields and $r_0 = 0.5$ initial radius are presented in \autoref{Fig2} after 40 iterations for the exact data and 25 iterations for the noisy. In \autoref{Fig3}, we see that the reconstructions are not highly dependent on the initial guess.

In the second example, the boundary to be reconstructed is the apple-shaped. Here, we set $m=4,$ and $r_0 = 0.5.$ The reconstructions for exact data and different number of incident fields are presented in \autoref{Fig4} for 18 iterations (one incident direction) and 40 iterations (three incident directions). \autoref{Fig5} shows the effect of the initial guess for noisy data and 40 iterations.

In the last example, we choose the kite-shaped boundary. We consider $m=7$ coefficients and $r_0 = 1.5.$ In \autoref{Fig6} we see the improvement with respect to the number of incident fields for exact data, 10 iterations for three illuminations and 40 iterations for four illuminations. The dependence on the initial guess is shown in \autoref{Fig7}, for $r_0 = 1$ we needed 40 iterations and 25 for $r_0 = 1.5$, in doth cases we considered noisy data.

All examples show the feasibility of the proposed method that is also reasonably stable against noise. The results are considerably improved if we consider more that one incident wave. One could also considered more sophisticated regularization techniques and methods to compute the regularization parameter that could improve the reconstructions but are out of the scope of this paper.


\small

\normalsize

\begin{figure}[p]
\begin{center}
\includegraphics[scale=0.8]{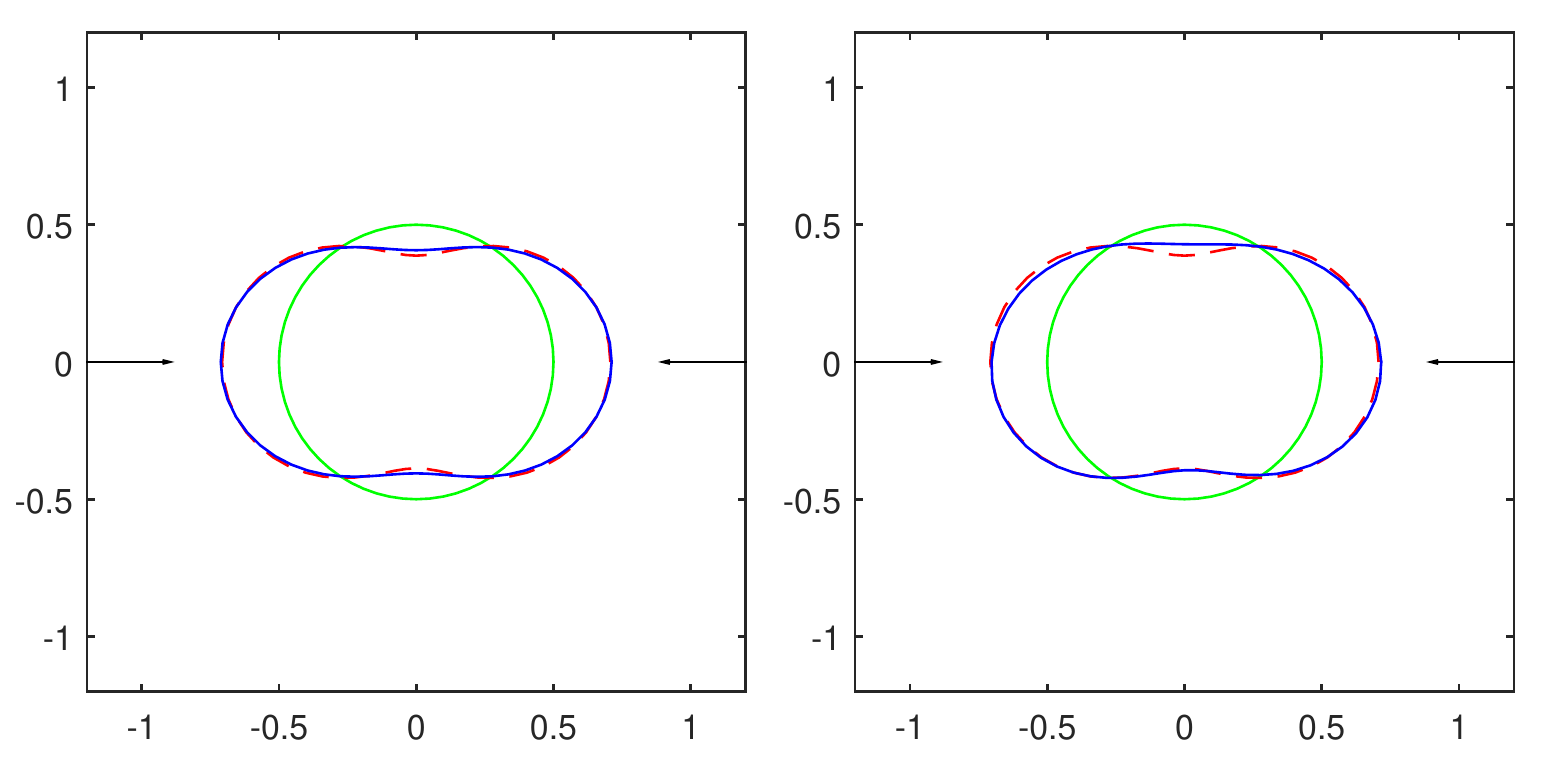}
\caption{Reconstruction of a peanut-shaped boundary for exact (left) and noisy (right) data. }\label{Fig2}
\end{center}
\end{figure}

\begin{figure}[p]
\begin{center}
\includegraphics[scale=0.8]{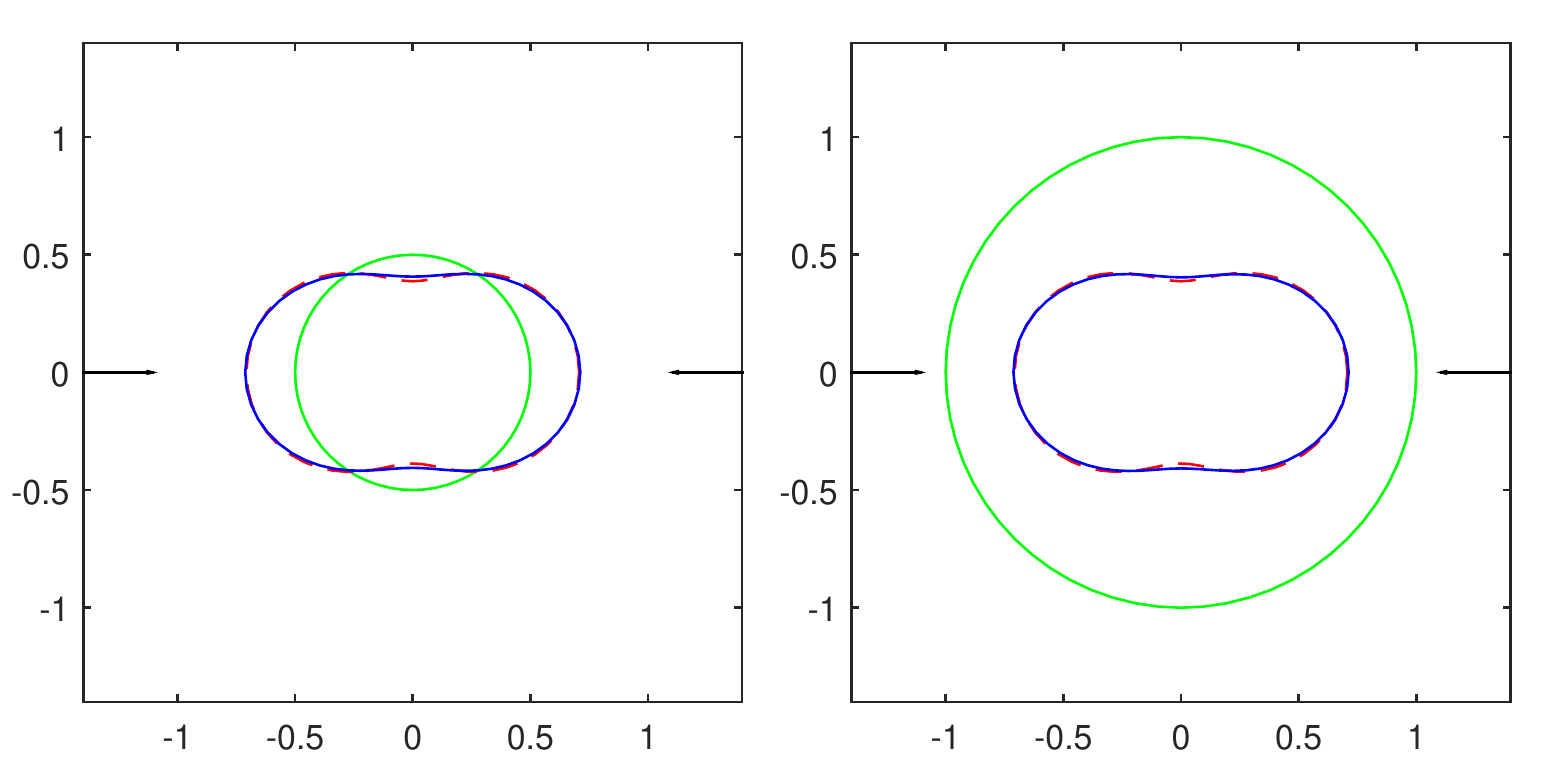}
\caption{Reconstruction of a peanut-shaped boundary for initial guess $r_0 =  0.5$ (left) and $r_0 =  1$ (right) and exact data. }\label{Fig3}
\end{center}
\end{figure}

\begin{figure}[p]
\begin{center}
\includegraphics[scale=0.58]{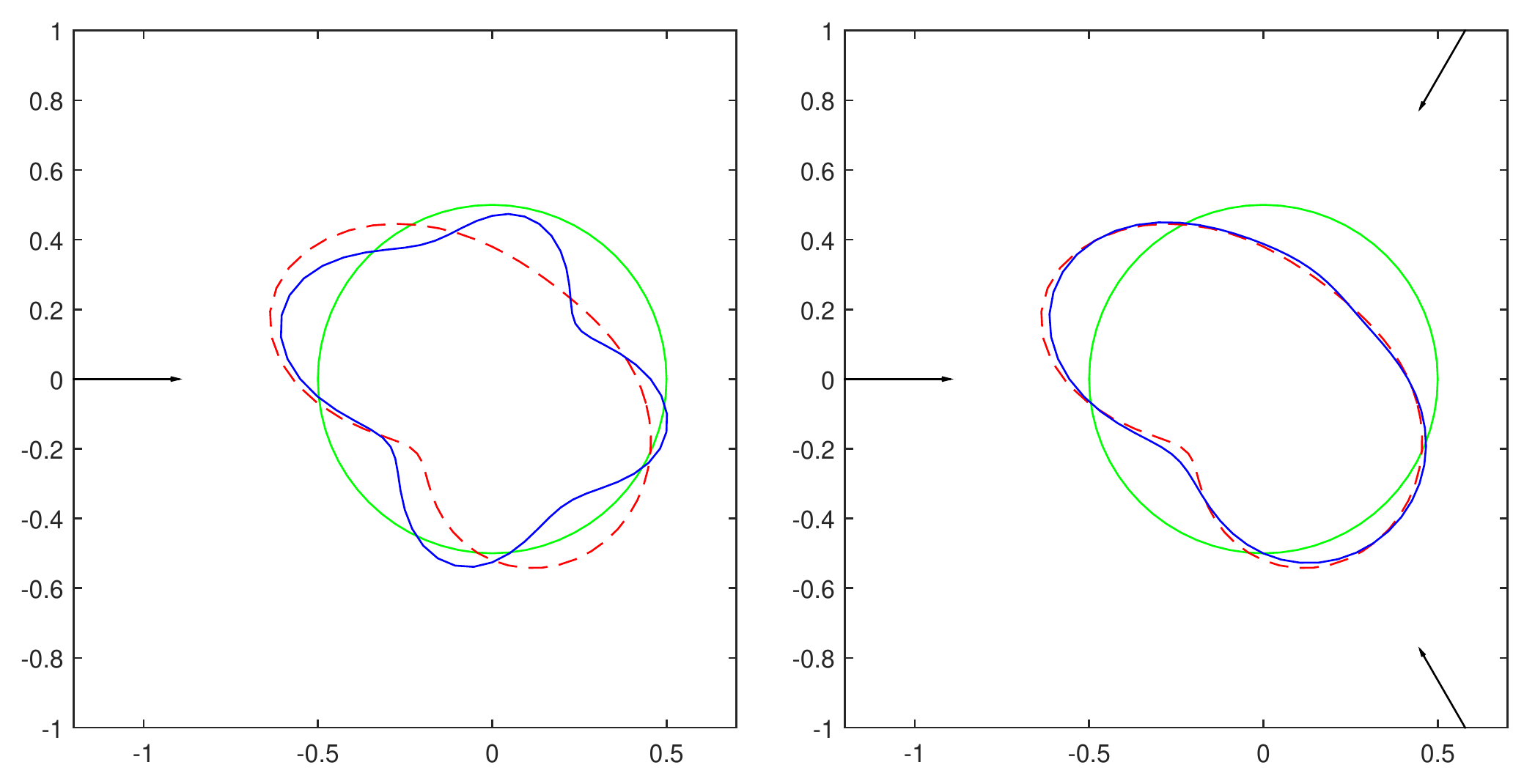}
\caption{Reconstruction of a apple-shaped boundary for one (left) and three (right) incident fields. }\label{Fig4}
\end{center}
\end{figure}

\begin{figure}[p]
\begin{center}
\includegraphics[scale=0.8]{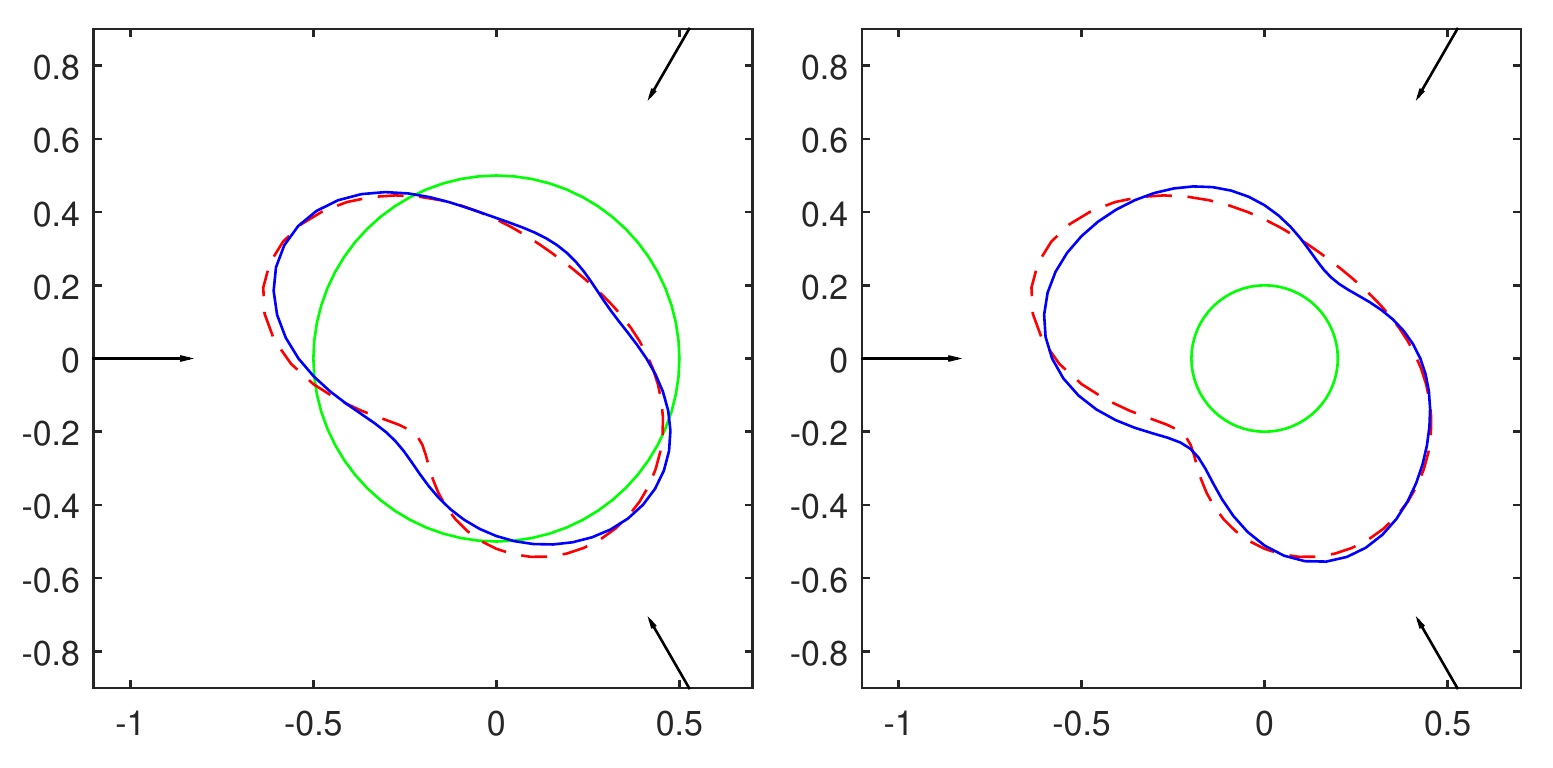}
\caption{Reconstruction of a apple-shaped boundary for initial guess $r_0 =  0.5$ (left) and $r_0 =  0.2$ (right) and noisy data. }\label{Fig5}
\end{center}
\end{figure}

\begin{figure}[p]
\begin{center}
\includegraphics[scale=0.8]{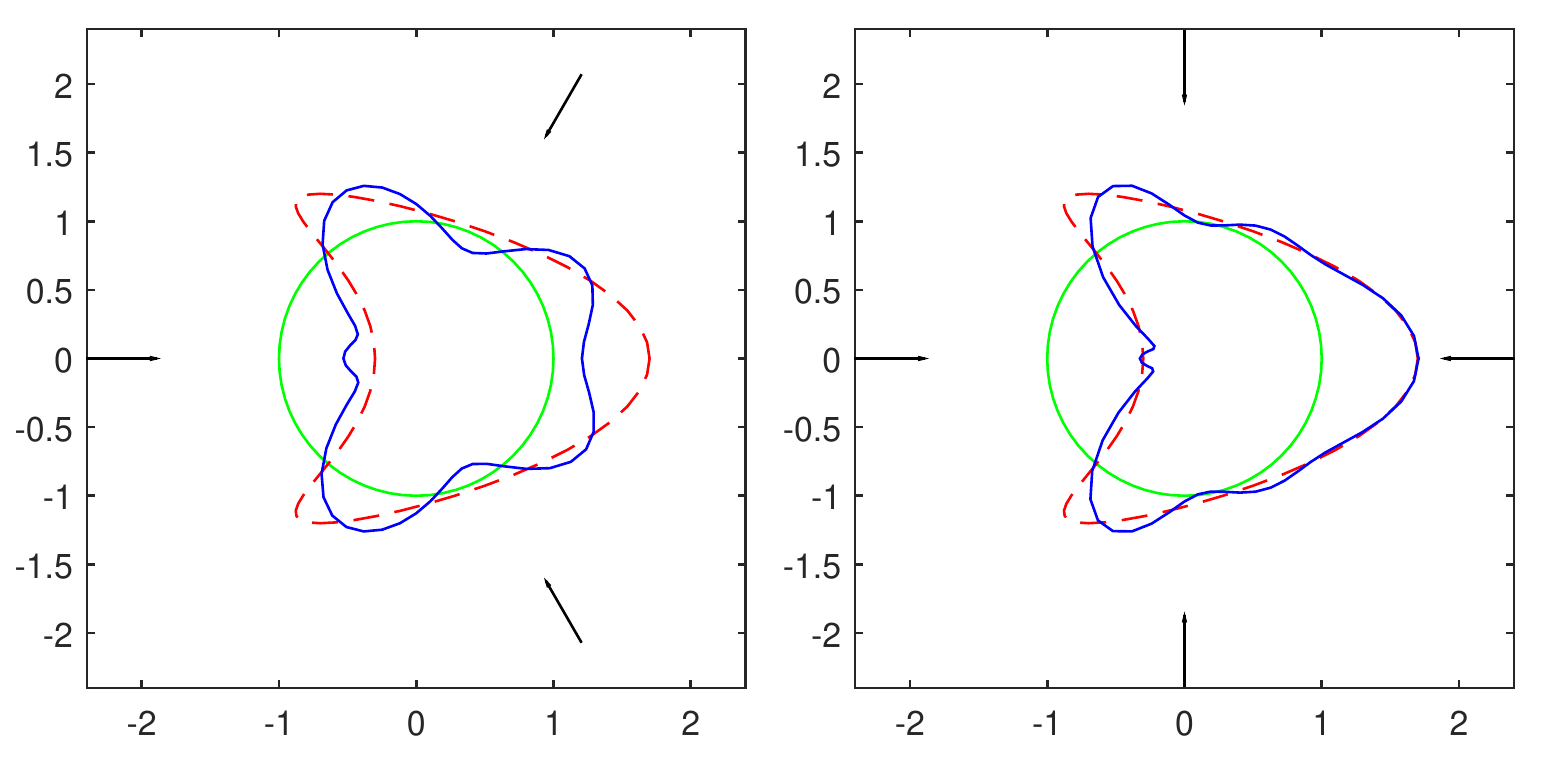}
\caption{Reconstruction of a kite-shaped boundary for three (left) and four (right) incident fields. }\label{Fig6}
\end{center}
\end{figure}

\begin{figure}[p]
\begin{center}
\includegraphics[scale=0.8]{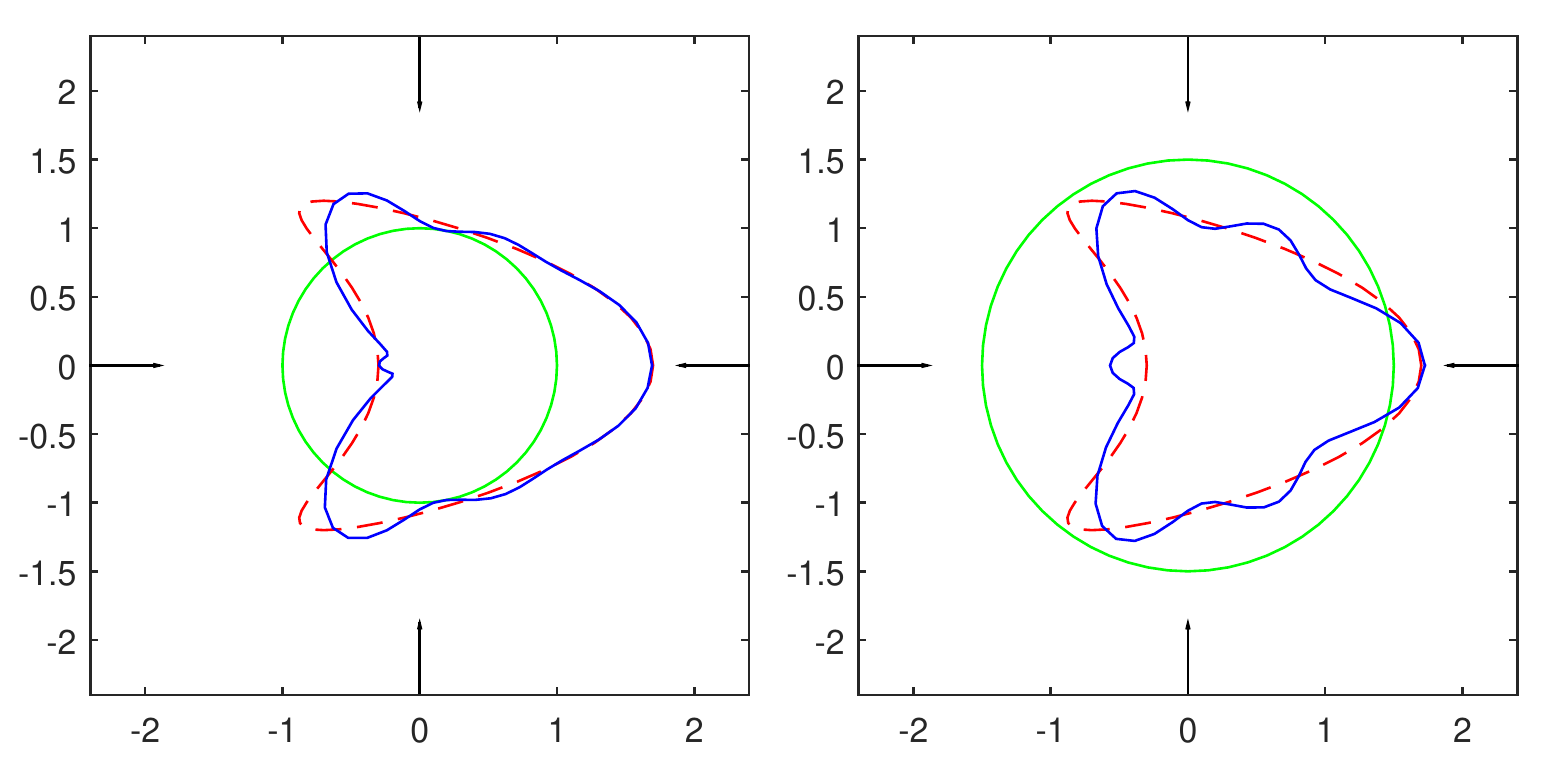}
\caption{Reconstruction of a kite-shaped boundary for initial guess $r_0 =  1$ (left) and $r_0 =  1.5$ (right) and noisy data. }\label{Fig7}
\end{center}
\end{figure}

\end{document}